\newcommand{\p}{{\rm p^{r}}}
\newcommand{\codim}{{\rm codim}}
\newcommand{\Pic}{{\rm Pic}}
\newcommand\loc{{\rm loc}}
\def\rank{\mathop{\rm rank}\nolimits}
\def\Pic{\mathop{\rm Pic}\nolimits}
\def\Det{\mathop{\rm det}\nolimits}
\def\Div{\mathop{\rm div}\nolimits}
\def\pr{\mathop{\rm pr}\nolimits}
\def\Sym{\mathop{\rm Sym}\nolimits}
\let\ep=\varepsilon
\newcounter{lemma}
\renewcommand{\thelemma}{\strut\kern-3pt\arabic{section}.\arabic{lemma}}
\newtheorem{lemma1}[lemma]{\setcounter{equation}{0}}
\let\saveref=\ref
\def\ref#1{\strut\kern3pt{\saveref{#1}}}
\def\eqref#1{({\saveref{#1}})}
\newenvironment{lemma}{\begin{lemma1}{\bf Lemma.}}{\end{lemma1}}
\newenvironment{theorem}{\begin{lemma1}{\bf Theorem.}}{\end{lemma1}}
\newenvironment{proposition}{\begin{lemma1}{\bf Proposition.}}{\end{lemma1}}
\newenvironment{corollary}{\begin{lemma1}{\bf Corollary.}}{\end{lemma1}}
\newenvironment{remark}{\begin{lemma1}{\bf Remark.}\rm}{\end{lemma1}}
\newenvironment{definition}{\begin{lemma1}{\bf Definition.}}{\end{lemma1}}
\newenvironment{conjecture}{\begin{lemma1}{\bf Conjecture.}}{\end{lemma1}}
\title[Manifolds with nef anticanonical bundles]{Albanese maps of projective manifolds with nef anticanonical bundles}
\begin{document}

\subjclass[2010]{32J27; 32Qxx.}
\keywords{nef anticanonical bundle, Albanese map,
positivity of direct images\\
\text{ }\text{ }\text{ }\textit{ Mots-clefs.} fibr\'{e} anticanonique nef, application d'Albanese, positivit\'{e} de l'image directe}

\author{Junyan Cao}

\address{Junyan Cao, Sorbonne Universit\'e - Campus Pierre et Marie Curie \\
Institut de Math\'{e}matiques de Jussieu\\
4, Place Jussieu, Paris 75252, France }
\email{junyan.cao@imj-prg.fr}

\begin{abstract}
Let $X$ be a projective manifold such that the anticanonical bundle $-K_X$ is nef.
We prove that the Albanese map $p: X \rightarrow Y$ is locally trivial. In particular, $p$ is a submersion.

\medskip

\hspace{-15pt} R\'{e}sum\'{e}. Soit $X$ une vari\'{e}t\'{e} projective \`{a} fibr\'{e} anticanonique nef. On montre que l'application d'Albanese $p: X \rightarrow Y$
est localement triviale. En particulier, $p$ est lisse.
\end{abstract}

\maketitle

\section{Introduction}

Let $X$ be a compact K\"ahler manifold such that the anticanonical bundle $-K_X$ is nef, and 
let $p : X\rightarrow Y$ be the Albanese map. By the work of Q. Zhang \cite{Zha96} and M. P\v aun \cite{Pau12},
we know that $\pi$ is a fibration, i.e. $\pi$ is surjective and has connected fibres.
Conjecturally, the Albanese map has more regularities :

\begin{conjecture} \cite{DPS96} \label{conjecturealbanese}
Let $X$ be a compact K\"ahler manifold such that  $-K_X$ is nef, and 
let $p: X\rightarrow Y$ be the Albanese map.
Then $p$ is locally trivial, i.e., for any small open set $U\subset Y$, $p^{-1}(U)$ is biholomorphic to the product $U \times F$, where 
$F$ is the generic fibre of $p$. In particular, $p$ is a submersion.
\end{conjecture}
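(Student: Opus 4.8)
The plan is to turn the global assertion into a \emph{flatness} statement about suitable direct image sheaves on the Albanese torus $Y$, and then to convert that flatness back into a holomorphic local product structure. Throughout I would exploit that $Y$ is an abelian variety, so $K_Y\cong\cO_Y$ and hence $K_{X/Y}=K_X$; in particular $-K_{X/Y}=-K_X$ is nef, and restricting a nef class to a general fibre $F$ shows that $-K_F$ is nef as well. Since the cited results of Zhang and Păun already give that $p$ is a fibration, the only thing at stake is holomorphic triviality of $p$ over small polydiscs $U\subset Y$, so it suffices to produce biholomorphisms $p^{-1}(U)\cong U\times F$.

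First I would study the \emph{positivity of the direct images} $p_*\bigl(m\,K_{X/Y}\otimes L\bigr)$ for an appropriate auxiliary bundle $L$, and, where these vanish, the Hodge-type bundles $R^qp_*\cO_X$ and $R^qp_*\Omega^\bullet_{X/Y}$. Nefness of $-K_X$ provides metrics on $-K_{X/Y}$ of curvature $\geq-\ep\,\omega$ for every $\ep>0$; feeding these into the Berndtsson--Păun / Cao--Păun machinery equips the relevant direct images with positively curved singular Hermitian (relative $L^2$, or Bergman) metrics, so that the associated vector bundles on $Y$ are nef.

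The technical heart is to upgrade ``nef'' to \emph{numerically flat}. Here the abelian structure of the base is decisive: a nef bundle on $Y$ with numerically trivial determinant is numerically flat, and one shows the determinant is numerically trivial by playing the Hodge-theoretic semipositivity of the direct image against the negativity forced by the nef anticanonical hypothesis, pinching the first Chern class to zero. By the structure theory of numerically flat bundles (successive extensions of Hermitian-flat, i.e.\ unitary local systems) each such direct image is then \emph{locally constant} on $Y$, corresponding to a representation of $\pi_1(Y)$.

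Finally I would convert this flatness into local triviality of $p$. Local constancy of the full package of direct-image and Hodge data freezes the isomorphism type of the fibres and trivializes the period / Kodaira--Spencer data, making the classifying map of the family locally constant; integrating the flat Ehresmann-type connection built from the flat structure on the direct images that relatively (pluri)embed the fibres then yields the desired biholomorphisms $p^{-1}(U)\cong U\times F$. I expect two genuine obstacles. The first is the pinching step, since nefness gives only semipositivity, and excluding any strictly positive direction requires a careful interplay between the approximate anticanonical metrics and the triviality of $K_Y$. The second, and harder, is the last step: holomorphic local triviality is strictly stronger than isotriviality or the vanishing of Kodaira--Spencer, so one must control the relative automorphisms of $F$ and actually glue, which will likely force a separate treatment of the rationally connected part and the $c_1=0$ part of the fibres via a Beauville--Bogomolov type decomposition.
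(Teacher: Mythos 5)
Your outline has the right skeleton (positivity of direct images, upgrading to numerical flatness, converting flatness into local triviality), and this is indeed the paper's strategy at the coarsest level, but two of your steps contain genuine gaps, and at exactly those points the paper's mechanisms are different from what you propose. The first gap is the claim that metrics on $-K_{X/Y}$ of curvature $\geq-\ep\,\omega$ can be fed into the Berndtsson--P\u{a}un / P\u{a}un--Takayama machinery ``so that the associated vector bundles on $Y$ are nef''. That machinery requires the twisting bundle to be genuinely pseudo-effective (curvature $\geq 0$ as a current, or at worst with negativity pulled back from $Y$, as in Proposition \ref{lowercontr}); an $\ep$-loss in the \emph{fibre} directions is not absorbed by fibre integration, and no limit over $\ep$ restores it. The paper instead uses nefness qualitatively, via Zhang's trick: for every $m$, the bundle $-mK_{X/Y}+\ep L+p^\star A_Y$ is \emph{ample}, so a $p$-ample pseudo-effective $L$ can be rewritten as $mK_{X/Y}+(\text{ample})+(\text{pseudo-effective})$ and the direct-image results apply with no loss (Proposition \ref{posdir}). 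Even then the output is not nefness but only \emph{generic global generation} of $p_\star(L)\otimes 2A_Y$, where the twist $A_Y$ is fixed by the Ohsawa--Takegoshi theorem independently of $p$, $L$ and $m$; nefness is extracted only afterwards by the isogeny argument on the abelian variety (Steps 2--3 of the proof of Theorem \ref{mainproof}, following \cite[Lemma 3.21]{DPS94}): pulled back through an $n$-isogeny, the fixed twist is worth only $\frac{1}{n}$ of the polarization, and Galois-averaging the resulting metrics gives curvature $\geq-\frac{3}{n}\,p^\star\omega_Y$ on $\cO_{\mathbb{P}(p_\star(L))}(1)$, whence nefness as $n\to\infty$. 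The paper itself stresses that \cite{PT14} cannot be applied directly when $-K_X$ is merely nef; without the isogeny step nefness simply does not follow. Relatedly, your choice of sheaves is off: since $-K_{X/Y}$ need not be $p$-big, the bundles $p_\star(mK_{X/Y}\otimes L)$ and the Hodge bundles need not see the fibres at all. The paper takes an auxiliary very ample $A$ (with $p_\star(mA)$ locally free, using flatness from \cite{LTZZ10}), proves that $rA-p^\star\det p_\star(A)$ is pseudo-effective via Viehweg's fibre-product diagonal trick (Proposition \ref{psf} --- the technical heart, absent from your plan), and then works with $L:=A-\frac{1}{r}\,p^\star\det p_\star(A)$, for which $c_1(p_\star(L))=0$ holds \emph{by construction}; the ``pinching'' you gesture at is realized, for the higher multiples $p_\star(mL)$, as two pseudo-effective classes summing to zero (Lemma \ref{addedle}), resting again on Propositions \ref{psf} and \ref{addedlemma} rather than on Hodge-theoretic semipositivity.

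Your worry about the final step is legitimate, but your proposed fix points in the wrong direction. One needs neither an Ehresmann-type connection, nor control of relative automorphisms, nor a Beauville--Bogomolov splitting of the fibre (the latter is essentially the much harder decomposition theorem of \cite{CH17b}, not available here). The paper uses the criterion of \cite[Prop 4.1]{CH17a} (Proposition \ref{isotri}): if $L$ is $p$-very ample and $p_\star(mL)$ is numerically flat for \emph{every} $m\geq 1$, then, since numerically flat bundles are local systems (Theorem \ref{keyflat}), the kernels $f_\star(\mathcal{I}_X\otimes\cO_{\mathbb{P}(E_1)}(m))$ in the restriction exact sequences are numerically flat as well, hence local systems, and their flat sections have \emph{constant} coefficients in a flat frame of $\Sym^m E_1$; the relative embedding of $X$ in $\mathbb{P}(E_1)$ is therefore cut out by polynomials with locally constant coefficients, which is precisely local triviality. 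So the decisive requirement your outline misses is not a structure theorem for $F$, but numerical flatness of the entire graded family $\{p_\star(mL)\}_{m\geq 1}$ --- and producing that family is what Sections \ref{twolemme}--\ref{mainsect} of the paper are engineered to do.
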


This conjecture has been proved under the 
stronger assumption that $T_X$ is nef, $-K_X$ is hermitian or the anticanonical bundle of the generic fibre is big 
\cite{CP91,DPS94,DPS96,CDP14,CH17a}. For the general case, \cite{LTZZ10} proved that $p$ is equidimensional and 
has reduced fibres.
In low dimension,  \cite{PS98} proved that the Albanese map is a submersion for $3$-dimensional projective manifolds. 

\medskip

The aim of this article is to prove the conjecture under the assumption that $X$ is projective :
\begin{theorem}\label{mainthm}
Let $X$ be a projective manifold with nef anti-canonical bundle and let $p: X\to Y$ be the Albanese map. 
Then $p$ is locally trivial, i.e., for any small open set $U\subset Y$, $p^{-1}(U)$ is biholomorphic to the product $U \times F$, where 
$F$ is the generic fibre of $p$.
\end{theorem}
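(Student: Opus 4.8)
The base $Y=\Alb(X)$ is an abelian variety, so $K_Y$ is trivial and hence $K_{X/Y}=K_X$; in particular $-K_{X/Y}=-K_X$ is nef. The plan is to extract a flat structure on the direct images of the relative pluricanonical bundles, transport it along the abelian variety $Y$, and integrate it into a biholomorphic trivialisation. By the results of Zhang and P\v aun the map $p$ is a fibration, and by \cite{LTZZ10} it is equidimensional with reduced fibres; I would use this to work over the smooth locus $Y_0\subset Y$ of $p$ and to control the behaviour along its complement.

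First I would establish positivity of the direct images. For each $m\ge 1$ set $\cE_m:=p_*(mK_{X/Y})$. Since $-K_{X/Y}$ is nef, the theory of positivity of direct images of relative pluricanonical bundles (Berndtsson--P\v aun, P\v aun--Takayama, and the author's work with P\v aun) equips $\cE_m$ with a singular Hermitian metric of semipositive curvature, and over $Y_0$ it is a nef vector bundle. The role of the abelian variety now enters: positivity of direct images makes $\det\cE_m$ pseudoeffective, while the nef anticanonical hypothesis supplies the complementary upper bound, forcing $\det\cE_m$ to be numerically trivial. A nef vector bundle with numerically trivial determinant on a projective manifold is numerically flat, so by the structure theorem of Demailly--Peternell--Schneider each $\cE_m$ is Hermitian flat, i.e.\ given by a unitary representation $\rho_m:\pi_1(Y)\to U(N_m)$.

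Next I would convert this flatness into isotriviality. The flat structures on the $\cE_m$ are compatible with the multiplication maps $\cE_m\otimes\cE_{m'}\to\cE_{m+m'}$, so the relative canonical algebra $\bigoplus_m\cE_m$ is a locally constant sheaf of graded algebras on $Y$. Taking relative $\mathrm{Proj}$ shows that the relative Iitaka (canonical) model of the fibres forms a locally trivial family: the canonically polarised part of the fibre $F$ does not vary, and parallel transport for the flat connection produces the biholomorphisms gluing $p^{-1}(U)$ to a product along this part.

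The main obstacle is that $-K_F$ nef allows $K_F$ not to be big --- for instance $F$ Fano or with $c_1(F)=0$ --- so the pluricanonical bundles do not embed $F$ and the $\cE_m$ only see the image of the Iitaka fibration of $F$. To reach genuine local triviality of $p$ one must separately rigidify the remaining directions: the rationally connected part is controlled by the rigidity of Fano fibrations together with the fact that $-K_X$ restricts to a nef anticanonical bundle on each fibre, while the $c_1\equiv 0$ part is controlled by the flatness of the Hodge bundle $R^0p_*K_{X/Y}$ and rigidity of the period map over an abelian base. Patching these three pieces --- canonical, Calabi--Yau, and rationally connected --- compatibly with the single flat structure on $Y$, and checking that the gluing descends through any \'etale covers introduced by such a fibrewise decomposition, is where the real work lies; this is the step I expect to be hardest, and it is presumably also where an induction on $\dim F$ is needed.
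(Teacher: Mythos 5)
There is a genuine gap, and it sits at the very centre of your argument: the pluricanonical direct images $\cE_m = p_\star(m K_{X/Y})$ carry no information in this setting. Since $-K_X$ is nef, every fibre $F$ has $-K_F$ nef, so for any $m\geq 1$ a nonzero section of $mK_F$ would give an effective divisor $D$ with $D\cdot H^{\dim F-1} = m\, K_F\cdot H^{\dim F-1}\leq 0$ for an ample $H$, forcing $D=0$ and $mK_F\simeq \mathcal{O}_F$. Hence each $\cE_m$ is either zero or of rank one, the relative canonical algebra $\bigoplus_m \cE_m$ is at most a rank-one algebra in each degree, and its relative $\mathrm{Proj}$ is $Y$ itself (or empty): the ``canonically polarised part of the fibre'' you propose to rigidify first is always a point. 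You partially acknowledge this, but the consequence is that your entire flat-algebra mechanism produces nothing, and the theorem reduces to the step you defer --- rigidifying the rationally connected and $c_1\equiv 0$ directions and patching them. That step is not a verification; it is essentially the full content of the theorem, and indeed is close to the splitting conjecture of Demailly--Peternell--Schneider, which was proved (in the projective case) only \emph{after} and \emph{using} this local triviality result. A smaller but real error along the way: a nef bundle with numerically trivial determinant is numerically flat but not Hermitian flat in general; the structure theorem gives only a filtration with Hermitian flat quotients, so there is no unitary representation $\rho_m$ to transport.

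The paper's proof circumvents the emptiness of the pluricanonical images by never decomposing the fibre at all. It takes an arbitrary $p$-very ample line bundle $A$ with $p_\star(mA)$ locally free (using \cite{LTZZ10}: $p$ is flat and smooth in codimension one), and the nefness of $-K_{X/Y}$ enters only through Zhang's trick: for any pseudoeffective $p$-ample $L'$ one writes $L' + (\text{ample}) = mK_{X/Y} + (-mK_{X/Y} + \text{ample}) + L'$, so that the Berndtsson--P\u{a}un/P\u{a}un--Takayama positivity of twisted relative adjoint bundles applies with an integrable multiplier ideal. Combined with Viehweg's fibre-product diagonal argument this yields that $rA - p^\star\det p_\star(A)$ is pseudoeffective (Proposition \ref{psf}); one then normalises $L := A - \frac{1}{r}p^\star \det p_\star(A)$ after an isogeny, so $c_1(p_\star(L))=0$ by construction, and upgrades the generic global generation of $p_\star(L)$ twisted by a fixed ample $A_Y$ to nefness via the isogeny argument on the abelian base (the torsion points of $\pi_{Y,n}$ spread the generic surjectivity everywhere, and averaging the metric kills the $\frac{3}{n}$-loss). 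Numerical flatness of \emph{all} $p_\star(mL)$ then follows (Lemma \ref{addedle}, using Proposition \ref{addedlemma}), and the local-system trivialisation criterion of \cite{CH17a} (Proposition \ref{isotri}), applied to the relative embedding defined by $L$, gives local triviality directly --- no fibrewise Fano/Calabi--Yau decomposition, no induction on $\dim F$.
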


\medskip

As an application of Theorem \ref{mainthm}, we can study the structure of the universal cover of 
projective manifolds with nef anticanonical bundles. Recalling that, for 
a compact K\"{a}hler manifold with hermitian semipositive anticanonical bundle $X$, 
\cite{DPS96,CDP14} proved that, the universal covering $\widetilde{X}$ admits a holomorphic and isometric splitting 
$$\widetilde{X} \simeq \mathbb{C}^q \times \prod Y_j \times \prod S_k \times \prod Z_l ,$$
where $Y_j$ are irreducible Calabi-Yau manifolds, $S_k$ are irreducible hyperk\"{a}hler manifolds,
and $Z_l$ are rationally connected manifolds  with irreducible holonomy. They expect a similar splitting result for 
compact K\"{a}hler manifolds with nef anticanonical bundles\footnote{Very recently, \cite{CH17b} proved the conjecture for projective manifolds with nef anticanonical bundles.
}:

\begin{conjecture}\label{newcon}
Let $X$ be a compact K\"{a}hler manifold with nef anticanonical bundle. Then the universal covering $\widetilde{X}$ of $X$ admits the following splitting
$$\widetilde{X} \simeq \mathbb{C}^q \times \prod Y_j \times \prod S_k \times Z  ,$$
where $Y_j$ are irreducible Calabi-Yau manifolds, $S_k$ are irreducible hyperk\"{a}hler manifolds,
and $Z$ is a rationally connected manifold.
\end{conjecture}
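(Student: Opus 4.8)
The plan is to run the maximal rationally connected (MRC) fibration of $X$, to recognise its base as a manifold with numerically trivial canonical bundle, and then to decompose that base by the Beauville--Bogomolov theorem; Theorem \ref{mainthm} enters to pin down the torus factor. I work throughout in the projective category, where Theorem \ref{mainthm} is available.

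First I would split off the rationally connected factor. Let $\psi\colon X\to W$ be the MRC fibration (a priori only a dominant rational map). The central technical claim — and the main obstacle — is that, because $-K_X$ is nef, $\psi$ is in fact an everywhere-defined, locally trivial holomorphic fibration whose general fibre $Z$ is rationally connected and whose base $W$ is a smooth projective manifold that is \emph{not} uniruled. The mechanism is the positivity-of-direct-images philosophy underlying Theorem \ref{mainthm}: the nefness of $-K_X$ forces enough semipositivity on the sheaves $\psi_*\sO(-mK_{X/W})$ to remove the indeterminacy of $\psi$ and to rigidify the family. Granting this, since a rationally connected manifold is simply connected one has $\pi_1(X)\cong\pi_1(W)$, and pulling $\psi$ back along the universal covering of $W$ yields, by local triviality, a holomorphic splitting
$$\widetilde X \;\cong\; \widetilde W \times Z .$$
This already produces the rationally connected factor $Z$ and reduces the statement to a decomposition of $\widetilde W$.

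Next I would show that $K_W$ is numerically trivial. A further direct-image estimate gives that $-K_W$ is again nef. Since $W$ is not uniruled, the theorem of Boucksom--Demailly--P\v{a}un--Peternell shows that $K_W$ is pseudoeffective; as $-K_W$ is nef, hence also pseudoeffective, the two opposite pseudoeffective classes force $K_W\equiv 0$. For such $W$ the Beauville--Bogomolov decomposition applies and gives a holomorphic, isometric splitting of the universal cover
$$\widetilde W \;\cong\; \bC^{\,q}\times\prod_j Y_j\times\prod_k S_k ,$$
with the $Y_j$ irreducible Calabi--Yau, the $S_k$ irreducible hyperk\"ahler, and $\bC^{\,q}$ the universal cover of the abelian part. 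Here Theorem \ref{mainthm}, applied to $W$ (which itself carries a nef anticanonical bundle), identifies this flat factor cleanly: the Albanese map of $W$ is locally trivial, so the abelian factor splits off holomorphically as an honest direct factor rather than merely up to isogeny. Assembling the two splittings yields
$$\widetilde X \;\cong\; \bC^{\,q}\times\prod_j Y_j\times\prod_k S_k\times Z ,$$
which is the asserted decomposition.

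The decisive difficulty is concentrated entirely in the first step. A priori the MRC fibration is only a rational map, and its fibres over the indeterminacy locus may degenerate; turning the nefness of $-K_X$ into the semipositivity of $\psi_*\sO(-mK_{X/W})$, and using that semipositivity both to extend $\psi$ to a morphism and to promote it to a locally trivial family, is precisely the point where the analytic positivity theory behind Theorem \ref{mainthm} must be sharpened. Once the fibration is known to be locally trivial, the numerical triviality of $K_W$ and the Beauville--Bogomolov splitting are, respectively, a short pseudoeffectivity argument and a classical structure theorem, so no further serious obstruction remains.
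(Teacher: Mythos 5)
The statement you were asked about is labelled a \emph{conjecture} in the paper, and the paper does not prove it: it only establishes the partial result of Corollary \ref{structurere} (a splitting $\widetilde{X} \simeq \mathbb{C}^r \times F$ with $F$ simply connected, obtained from Theorem \ref{mainthm} together with Proposition \ref{isomalb}), and a footnote attributes a proof of the projective case of the full conjecture to the separate paper \cite{CH17b}. Measured against that, your proposal is not a proof but an outline of the known strategy with the crux assumed. Your ``central technical claim'' --- that the MRC fibration $\psi: X \dashrightarrow W$ is an everywhere-defined, locally trivial holomorphic fibration onto a smooth non-uniruled projective base --- is exactly the hard content (it is essentially the main theorem of \cite{CH17b}), and you explicitly write ``Granting this'' rather than proving it. It does not follow from Theorem \ref{mainthm} or from a routine invocation of positivity of direct images: Theorem \ref{mainthm} concerns the Albanese map, and its proof uses in an essential way that the base is a torus (isogenies $\pi_{Y,n}$, torsion points, the averaging argument in Step 3, and the criterion of Proposition \ref{isotri} requires numerical flatness of \emph{all} $p_\star(mL)$). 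None of Propositions \ref{psf}, \ref{addedlemma} or \ref{posdir} applies to a rational map with indeterminacies, and nothing in the paper's toolkit removes the indeterminacy of $\psi$ or rigidifies the family over a general base.

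There are secondary gaps as well. The assertion that ``a further direct-image estimate gives that $-K_W$ is again nef'' is unsupported: nefness of the anticanonical bundle of the MRC base is not a known consequence of $-K_X$ nef, and in \cite{CH17b} the numerical triviality $K_W \equiv 0$ is obtained by a different and substantially harder route; your BDPP-plus-nefness argument would be fine \emph{if} $-K_W$ were nef, but that hypothesis is itself part of what must be proved. Moreover the conjecture is stated for compact K\"ahler $X$, while your entire argument (like Theorem \ref{mainthm}) is confined to the projective category, so even if completed it would only address the projective case. The downstream steps --- $\pi_1(X)\cong\pi_1(W)$ via simple connectedness of rationally connected fibres, and the Beauville--Bogomolov decomposition once $K_W\equiv 0$ --- are sound, but they sit entirely behind the missing first step, which carries the whole weight of the argument.
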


\noindent This conjecture was proved for $3$-dimensional projective manifolds \cite{BP04}. For an arbitrary compact K\"{a}hler manifold $X$ with nef anticanonical bundle, 
thanks to \cite{Cam95, Pau97, Pau12}, we know that 
the fundamental group $\pi_1 (X)$ of $X$ is almost abelian (cf. also Proposition \ref{isomalb}).
Together with Theorem \ref{mainthm}, we get the following partial result for Conjecture \ref{newcon}.

\begin{corollary}\label{structurere}
Let $X$ be a projective manifold with nef anticanonical bundle. Then the universal cover $\widetilde{X}$ of $X$ admits the following splitting
$$\widetilde{X} \simeq \mathbb{C}^r \times F .$$
Here $F$ is a compact simply connected projective manifold with nef anticanonical bundle, and $r = \sup h^{1,0} (\widehat{X})$ where the supremum is taken over
all finite \'{e}tale covers $\widehat{X} \rightarrow X$.
\end{corollary}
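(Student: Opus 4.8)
The plan is to reduce everything to one well-chosen finite étale cover and then feed it into Theorem \ref{mainthm}. Since $\pi_1(X)$ is almost abelian (Proposition \ref{isomalb}), the Betti numbers $b_1(\widehat X) = 2h^{1,0}(\widehat X)$ of the finite étale covers $\widehat X \to X$ are uniformly bounded, so the supremum $r = \sup h^{1,0}(\widehat X)$ is finite and attained. I fix an $\widehat X$ with $h^{1,0}(\widehat X) = r$; passing to the further cover corresponding to the torsion-free part of $H_1$, I may assume in addition that $\pi_1(\widehat X) \cong \mathbb Z^{2r}$ is free abelian, which leaves $h^{1,0}$ unchanged because $b_1$ is unchanged and $r$ is already maximal. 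Note that $-K_{\widehat X}$ is again nef, being the pullback of $-K_X$ under an étale map.

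Next I would analyze the Albanese map of this cover. Write $\widehat p : \widehat X \to \widehat Y$ for the Albanese map, where $\widehat Y$ is a complex torus of dimension $r$ with universal cover $\mathbb C^r$. Because $\pi_1(\widehat X)$ is abelian it coincides with $H_1(\widehat X,\mathbb Z)$, and the Albanese morphism induces an isomorphism $\widehat p_* : \pi_1(\widehat X) \xrightarrow{\sim} \pi_1(\widehat Y) = \mathbb Z^{2r}$. Consequently the fibre product $\widehat X \times_{\widehat Y} \mathbb C^r$, which is the covering of $\widehat X$ associated to $\ker \widehat p_* = 0$, is exactly the universal cover $\widetilde X$ of $X$. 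By Theorem \ref{mainthm} the map $\widehat p$ is locally trivial with fibre $F$, so the induced projection $q : \widetilde X \to \mathbb C^r$ is a locally trivial holomorphic fibre bundle with the same fibre $F$. As $\widetilde X$ is simply connected and $\mathbb C^r$ is contractible, the homotopy exact sequence of $q$ forces $\pi_1(F)=1$, so $F$ is simply connected. Moreover $F$ is a fibre of the projective morphism $\widehat p$, hence projective, and since its normal bundle is $\widehat p^* T_{\widehat Y}|_F$, which is trivial as $\widehat Y$ is a torus, adjunction gives $-K_F = (-K_{\widehat X})|_F$, so $-K_F$ is nef.

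It remains to trivialize the bundle $q : \widetilde X \to \mathbb C^r$, and this is the main point. Since $F$ is compact, by Bochner--Montgomery its automorphism group $\Aut(F)$ is a complex Lie group, and $q$ is a holomorphic fibre bundle with structure group $\Aut(F)$. As the base $\mathbb C^r$ is simply connected, the $\pi_0$-valued monodromy vanishes and the structure group reduces to the connected complex Lie group $\Aut^0(F)$. I would then invoke Grauert's Oka principle: over the Stein contractible manifold $\mathbb C^r$ the holomorphic and topological classifications of principal $\Aut^0(F)$-bundles coincide, and every topological bundle over a contractible base is trivial. Hence the associated principal bundle, and therefore $q$ itself, is holomorphically trivial, yielding the desired biholomorphism $\widetilde X \simeq \mathbb C^r \times F$. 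I expect this trivialization to be the crux: pinning down the correct complex Lie structure group and applying the Oka principle is precisely where the holomorphic splitting, rather than a merely topological one, is obtained.
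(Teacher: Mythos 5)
Your proposal is correct in substance, but it reaches the splitting by a genuinely different route than the paper. The paper never needs to globalize local triviality by hand: its Theorem \ref{mainproof} (via Proposition \ref{isotri}) already asserts that $X\times_Y \widetilde{Y}\simeq \widetilde{Y}\times F$, because the relevant direct images are numerically flat, hence trivial local systems on the simply connected cover $\widetilde{Y}=\mathbb{C}^r$, so the relative projective embedding of $X\times_Y\widetilde{Y}$ is cut out by polynomials with constant coefficients. You instead use only the weaker statement of Theorem \ref{mainthm} (local triviality), view $q:\widetilde{X}\to\mathbb{C}^r$ as a holomorphic fibre bundle with structure group the complex Lie group $\Aut(F)$ (Bochner--Montgomery), reduce the structure group to $\Aut^0(F)$ using $\pi_1(\mathbb{C}^r)=1$, and trivialize by Grauert's Oka principle over the contractible Stein base. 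This is a valid and classical argument; what it buys is independence from the internal machinery of the paper (local triviality is used as a black box, and the argument would work for any locally trivial fibration with compact fibre over a contractible Stein base), at the price of importing Grauert's theorem. Similarly, for the $\pi_1$-isomorphism you replace the paper's Proposition \ref{isomalb} (Campana plus P\u{a}un) by the elementary observation that once $\pi_1$ is free abelian the Albanese map induces an isomorphism on $H_1$, hence on $\pi_1$; both routes ultimately rest on the almost-abelianness of $\pi_1(X)$. The remaining steps (identification of $\widehat{X}\times_{\widehat{Y}}\mathbb{C}^r$ with the universal cover, simple connectedness of $F$ via the homotopy sequence, nefness of $-K_F$ by adjunction with trivial normal bundle) agree with the paper's proof.

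One step is stated imprecisely: ``passing to the further cover corresponding to the torsion-free part of $H_1$'' does not by itself give $\pi_1(\widehat{X})\cong\mathbb{Z}^{2r}$. Covers correspond to subgroups, not quotients, of $\pi_1$; the finite cover associated with the preimage of the free part of $H_1$ has fundamental group containing the whole commutator subgroup, so it is free abelian only if $\pi_1(\widehat{X})$ was already abelian (for instance, for a group like the integral Heisenberg group this cover is the manifold itself). The fix is exactly the property you invoked at the outset: since $\pi_1(\widehat{X})$ is almost abelian, it contains a finite-index abelian subgroup; pass first to the corresponding finite \'{e}tale cover (this keeps $h^{1,0}=r$ by maximality and monotonicity of $h^{1,0}$ under \'{e}tale covers), and only then kill the torsion of the now abelian fundamental group by a further finite cover. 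With this one-sentence repair your argument is complete.
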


\medskip

Let us explain briefly the basic ideas of the proof of Theorem \ref{mainthm}.  
Like many works on the study of the manifolds with nef anticanonical bundles (cf. \cite{BC16,CH17a,CPZ03,CZ13,DPS93,Den17a,FG12,LTZZ10,Ou17,Pau12,Zha05} to quote only a few), 
the proof of Theorem \ref{mainthm} is based on the positivity of direct images.
More precisely, in the setting of Theorem \ref{mainthm}, 
let $L$ be a pseudo-effective line bundle on $X$
and let $A$ be an ample line bundle on $X$. In general, we don't know about the positivity of $p_\star (L +A)$. 
However, as $-K_{X/Y}$ is nef in our case, we can obtain the positivity of $p_\star (L +A)$ by using 
the following very elegant argument in \cite{Zha05}.

Fix a possibly singular metric $h_L$ such that $i\Theta_{h_L} (L) \geq 0$ in the sense of current
and let $m\in\mathbb{N}$ large enough such that $\mathcal{J} (h_L ^{\frac{1}{m}}) =\mathcal{O}_X$
\footnote{We refer to the paragraph
before Theorem \ref{extension} for the definition of $\mathcal{J} (h_L ^{\frac{1}{m}})$.}. 
We have 
\begin{equation}\label{keytrick}
L+A = m K_{X/Y} + (-m K_{X/Y} +A) +L . 
\end{equation}
As $-K_{X/Y}$ is nef, $(-m K_{X/Y} +A) $ is ample and can be equipped with a smooth metric $h_1$ with positive curvature. 
Therefore $h= h_1 +h_L$ defines a possibly singular metric on
\begin{equation}\label{introdecom}
\widetilde{L} := (-m K_{X/Y} +A) +L 
\end{equation}
with $i\Theta_h (\widetilde{L})\geq 0$ and $\mathcal{J} (h^{\frac{1}{m}}) =\mathcal{O}_X$. 
Then the powerful results on the positivity of direct images (cf.\cite{BP08,BP10,Kaw82,Kaw98, Kol85,Fuj16,PT14,Tsu10,Vie95} among many others) can be used to study the direct image
$$p_\star (m K_{X/Y} + \widetilde{L}) =p_\star (L+A) .$$ 
We refer to Proposition \ref{posdir} and Corollary \ref{usefulcor} for some more accurate statements.

\smallskip

Another main ingredient involved in the proof is inspired and very close to \cite[3.D]{DPS94} and \cite{CPZ03}.
Recalling that, under the assumption that $-K_{X/Y}$ is $p$-ample, \cite{DPS94} proved that $p_\star (- m K_{X/Y})$
is numerically flat for every $m\in\mathbb{N}$. Thanks to this numerically flatness, we can prove the local trivialness of the Albanese map
\cite{DPS94, CH17a}.
In the situation of Theorem \ref{mainthm}, as $- K_{X/Y}$ is not necessarily strictly positive along the fibres, 
we consider an arbitrary $p$-ample line bundle $L$ on $X$ to replace $-K_{X/Y}$.
By \cite{LTZZ10}, we can assume that $p_\star (m L)$ is locally free for every $m\in\mathbb{N}$.
By combining \cite[3.D]{DPS94} with the positivity of direct images discussed above, we can prove that,
$p_\star (m L')$ is numerically flat for every $m\in \mathbb{N}$, where $L':= \rank p_\star (L) \cdot L - p^\star \det p_\star (L)$.
The fibration $p$ is thus locally trivial by using a criteria proved in \cite{DPS94, CH17a}, cf. also Proposition \ref{isotri}.

\medskip

Here are the main steps of the proof of Theorem \ref{mainthm}. Firstly, 
using the positivity of direct images \cite{BP10}, the diagonal method of Viehweg
\cite[Thm 6.24]{Vie95} as well as the method of Zhang \cite{Zha05}, 
we prove in Proposition \ref{psf} that for any $p$-ample line bundle $A$ on $X$,
if $p_\star (A)$ is locally free, then $r A - p^\star\det p_\star (A)$ is pseudo-effective, where $r$ is the rank of $p_\star (A)$.
Secondly, after passing to some isogeny of the abelian variety $Y$, we can assume that $\frac{1}{r }\det p_\star (A)$ is a line bundle. 
By using an isogeny argument \cite[Lemma 3.21]{DPS94} and \cite{BP10}, we prove that 
$p_\star (A) \otimes  (-\frac{1}{r}\det p_\star (A))$ is numerically flat. 
Finally, we use the arguments in \cite{DPS94,CH17a} to conclude that $p$ is locally trivial.

\medskip

Our paper is organized as follows. In Section \ref{pre}, after recalling some basic notations and results about the positivity of line bundles and vector bundles,
we will review a criteria of the locally trivialness in \cite{CH17a}.
We will also gather some results about the positivity of direct images in \cite{BP08, BP10, PT14}.
In Section \ref{twolemme}, inspired by \cite[Section 3.D]{DPS94}, we will prove two important propositions which will be the key ingredients in the proof
of main theorem \ref{mainthm}. Both propositions imply in particular that the Albanese map is very rigid.
Finally, a complete proof of Theorem \ref{mainthm} and Corollary \ref{structurere} is provided in Section \ref{mainsect}.

\medskip

\noindent {\bf Acknowledgements.} 
We thank S. Boucksom, J.-P. Demailly, A. H\"{o}ring, S.S.Y. Lu and  M. Maculan for helpful discussion about the article.
We thank in particular Y. Deng and M. P\u{a}un for their numerous comments and suggestions about the text. 
We would like to thank also the anonymous referee for the constructive suggestions who helped us to improve substantially the quality of the work.
This work was partially supported by the Agence Nationale de la Recherche grant ``Convergence de Gromov-Hausdorff en g\'{e}om\'{e}trie k\"{a}hl\'{e}rienne"
(ANR-GRACK).

\section{Preparation}\label{pre}

We first recall some basic notations about the positivity of line bundles and vector bundles. We refer to \cite{Dem12, DPS94, Laz} for more details.

\begin{definition}\label{listdef}
 Let $X$ be a projective manifold.
 
 \begin{enumerate}
  \item We say that a holomorphic line bundle $L$ over $X$ is numerically effective, nef for short, if $L\cdot C \geq 0$ for every curve $C\subset X$.
 
 \noindent Thanks to \cite[Prop 6.10]{Dem12}, a line bundle $L$ to be nef is equivalent to say that  
 for every $\epsilon >0$,  there is a smooth hermitian metric $h_\epsilon$ on $L$ such that $i\Theta_{h_\epsilon} (L) \geq -\epsilon \omega$ where $\omega$ is a fixed
 K\"{a}hler metric on $X$. 
 
 \item We say that a holomorphic vector bundle $E$ over $X$ is nef, if $\mathcal{O}_{\mathbb{P} (E)} (1)$ is nef on $\mathbb{P} (E)$.
 
 \item We say that a holomorphic vector bundle $E$ over $X$ is numerically flat if both $E$ and its dual $E^\star$ are nef.
 
 \noindent It is easy to see that $E$ is numerically flat if and only if $c_1 (\det E)=0$ and $E$ is nef.

\item Let $p : X\rightarrow Y$ be a fibration between two projective manifolds and let $L$ be a line bundle on $X$. We say that $L$ is $p$-ample (resp. $p$-very ample), 
if there exists a 
line bundle $L_Y$ on $Y$ such that $L+ p^\star L_Y$ is ample (resp. very ample).
 \end{enumerate}
\end{definition}

The following two theorems about the numerically flat vector bundles will be useful for us.

\begin{theorem}\label{keyflat}
Let $X$ be a compact K\"{a}hler manifold and let $E$ be a numerically flat vector bundle on $X$. Then 

\begin{enumerate}
 \item \cite[Thm 1.18]{DPS94} $E$ admits a filtration
$$\{0\} = E_0 \subset E_1 \subset\cdots \subset E_p =E$$
by vector subbundles such that the quotients $E_k / E_{k-1}$ are hermitian flat.\\

\item \cite[Section 3]{Sim92} $E$ is a local system and the natural Gauss-Manin connection $D_E$ on $E$ is compatible with the natural flat connection on 
the quotients $E_k / E_{k-1}$, i.e., $D_E  (E_k ) \subset E_k \otimes \Omega_X ^1$ and the induced connection $D_E$ on $E_k / E_{k-1}$ coincides with
the hermitian flat connection on $E_k / E_{k-1}$ for every $k$.
\end{enumerate}
\end{theorem}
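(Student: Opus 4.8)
The plan is to treat the two assertions by their respective sources: part (1) via the Kobayashi--Hitchin correspondence combined with the nef-positivity of Chern classes, and part (2) via the local-system description of numerically flat bundles. I would prove part (1) by induction on $\rank E$. The rank-one case is immediate: a numerically flat line bundle $L$ satisfies $L\cdot C=0$ for every curve $C\subset X$, hence is numerically trivial and carries a flat unitary metric, so it is hermitian flat.

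For the inductive step the first task is to record that a numerically flat $E$ is $\omega$-semistable for every K\"ahler class $\omega$ and has vanishing Chern numbers. Semistability is built into the definition: any quotient $Q$ of $E$ is nef (quotients of nef bundles are nef), so $\deg_\omega Q\ge 0$, while dually, since $E^\star$ is nef, every subsheaf $S\subset E$ has $\deg_\omega S\le 0$; as $\deg_\omega E=0$ this forces $\mu_\omega(S)\le 0=\mu_\omega(E)$. For the Chern numbers, the Fulton--Lazarsfeld positivity of nef bundles shows that both Schur classes $c_2(E)$ and $c_1(E)^2-c_2(E)$ are represented by nef cycles; since $c_1(E)=0$ this gives simultaneously $c_2(E)\cdot\omega^{n-2}\ge 0$ and $-c_2(E)\cdot\omega^{n-2}\ge 0$, whence $\int_X c_2(E)\wedge\omega^{n-2}=0$.

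Now I distinguish two cases. If $E$ is $\omega$-stable, the Uhlenbeck--Yau theorem (and Bando--Siu for possibly non-locally-free factors) furnishes a Hermitian--Einstein metric $h$; since $c_1(E)=0$ the Einstein condition reduces to $\Lambda_\omega\Theta_h=0$, and Chern--Weil theory bounds $\int_X|\Theta_h|^2\,\omega^n$ by a positive multiple of $\int_X\bigl(2r\,c_2(E)-(r-1)c_1(E)^2\bigr)\wedge\omega^{n-2}=2r\int_X c_2(E)\wedge\omega^{n-2}=0$. Hence $\Theta_h\equiv 0$, so $E$ is hermitian flat and the trivial filtration $\{0\}\subset E$ works. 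If $E$ is strictly semistable, a saturated destabilizing subsheaf $S$ of slope $0$ yields an exact sequence $0\to S\to E\to Q\to 0$, and I would conclude by splicing the filtrations of $S$ and $Q$ given by induction. The main obstacle lies precisely here: one must verify that $S$ is an honest numerically flat subbundle and $Q$ a numerically flat quotient bundle, i.e. that $c_1(S)=0$ in $H^2(X,\bR)$ (not merely $\deg_\omega S=0$) and that the inclusion is a subbundle inclusion. This is deduced from the nefness of both $E$ and $E^\star$ together with the Chern-class vanishing above, and is the genuinely delicate point of the argument.

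For part (2), each graded piece $E_k/E_{k-1}$, being hermitian flat, is a unitary local system, i.e. it corresponds to a unitary representation of $\pi_1(X)$. Since local systems form a category stable under extension, the iterated-extension structure of $E$ realizes $E$ itself as a local system; equivalently, by Simpson's correspondence the numerically flat bundles form a Tannakian subcategory equivalent to representations of $\pi_1(X)$ whose semisimplifications are unitary. The associated flat Gauss--Manin connection $D_E$ is then characterized by preserving the filtration, $D_E(E_k)\subset E_k\otimes\Omega^1_X$, and by inducing on each quotient $E_k/E_{k-1}$ exactly its hermitian flat connection, which is the content of the statement.
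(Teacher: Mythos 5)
The paper itself gives no proof of this statement: Theorem \ref{keyflat} is quoted as background, part (1) from \cite[Thm 1.18]{DPS94} and part (2) from \cite[Section 3]{Sim92} (the paper's remark also points to \cite{Den17b} and \cite{Ver04} for short proofs of (2)), so your attempt can only be measured against those sources. For part (1), your skeleton --- induction on rank, semistability of $E$ from nefness of $E$ and $E^\star$, $\int_X c_2(E)\wedge\omega^{n-2}=0$ from Fulton--Lazarsfeld positivity, Uhlenbeck--Yau plus the L\"ubke equality case when $E$ is stable --- is indeed the route of \cite{DPS94}, but it has two gaps. A minor one: since $X$ is only compact K\"ahler, nefness must be taken in the metric sense of \cite{DPS94}; your base case ``$L\cdot C=0$ for every curve, hence numerically trivial'' is vacuous on, say, a generic torus, which carries no curves at all. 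The correct argument is that $c_1(L)$ and $-c_1(L)$ both lie in the nef cone, which is salient, so $c_1(L)=0$, and then the $\partial\bar\partial$-lemma gives a flat metric. The serious one: the step you explicitly defer --- that the saturated destabilizing subsheaf $S$ is an honest \emph{subbundle}, that $c_1(S)=0$ in $H^2(X,\bR)$ and not merely $\deg_\omega S=0$, and that $S$ and $Q=E/S$ are again numerically flat --- is not a verification one can wave at; it is the actual content of the theorem in \cite{DPS94}. In particular, nefness of $Q^\star$ cannot be formal: $Q^\star$ is a subsheaf of the nef bundle $E^\star$, and subsheaves of nef bundles are in general not nef (e.g. $\mathcal{O}(-1)\subset\mathcal{O}^{\oplus 2}$ on $\bP^1$). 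The argument of \cite{DPS94} instead takes a \emph{stable} torsion-free quotient of slope zero from a Jordan--H\"older filtration, proves by Hermite--Einstein theory (Bando--Siu in the possibly non-locally-free case) together with the Chern-class vanishing that this quotient is hermitian flat, hence locally free, and only then splits off the kernel and inducts. Without carrying out this analysis your induction does not close.

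For part (2), the sentence ``since local systems form a category stable under extension, the iterated-extension structure of $E$ realizes $E$ itself as a local system'' is not valid reasoning. The filtration from part (1) exhibits $E$ as an iterated extension in the category of \emph{holomorphic vector bundles}; what must be proved is that each holomorphic extension class in $H^1(X,\Hom(E/E_k,E_k))$ lifts to an extension class of local systems, i.e. comes from the de Rham cohomology of the flat bundle $\Hom(E/E_k,E_k)$. That surjectivity is exactly where the compact K\"ahler hypothesis and Hodge theory with unitary local coefficients enter --- it is what \cite{Sim92}, and the short proof in \cite{Den17b} cited in the paper's remark, actually establish --- and it is false as a purely categorical statement. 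Your subsequent appeal to ``Simpson's correspondence'' is then a citation of the very result to be proved, which leaves you no further than the paper's own citation, but with an incorrect justification layered on top.
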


\begin{remark}
Recently, Y. Deng \cite[Chapter 6]{Den17b} gave an elegant and short proof of Theorem \ref{keyflat} $(2)$. 
In the case $X$ is a torus, we refer also to \cite[Lem 6.5, Cor 6.6]{Ver04} for a short proof of Theorem \ref{keyflat} $(2)$. 
\end{remark}

Theorem \ref{keyflat} implies the following criteria, which will be useful for the proof of Theorem \ref{mainthm}.

\begin{proposition}\cite[Prop 4.1]{CH17a}\label{isotri}
Let $p: X\rightarrow Y$ be a flat fibration between two compact K\"{a}hler manifolds and let $L$ be a $p$-very ample line bundle (cf. Definition \ref{listdef} (4)).
Set $E_m := p_\star ( m L)$. If $E_m$ is numerically flat for every $m \geq 1$, then $p$ is locally trivial. Moreover, let 
$\pi: \widetilde{Y} \rightarrow Y$ be the 
universal cover of $Y$. Then $\widetilde{X} := X \times_Y \widetilde{Y}$ admits the following splitting
$$\widetilde{X}  \simeq \widetilde{Y} \times F ,$$
where $F$ is the generic fibre of $p$.
\end{proposition}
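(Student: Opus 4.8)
The plan is to reconstruct $X$ from the graded $\mathcal{O}_Y$-algebra $\mathcal{A} := \bigoplus_{m\geq 0} E_m$ and to show that this algebra becomes \emph{constant} after pulling back to the universal cover. First, since $L$ is $p$-very ample, there is a line bundle $L_Y$ on $Y$ with $L + p^\star L_Y$ very ample, hence relatively very ample over $Y$. By the projection formula $p_\star\bigl(m(L+p^\star L_Y)\bigr) = E_m \otimes L_Y^{\otimes m}$, and since relative $\mathrm{Proj}$ is unchanged under twisting the $m$-th graded piece by $L_Y^{\otimes m}$, we obtain $X \simeq \mathbf{Proj}_Y\bigl(\bigoplus_m p_\star(mL)\bigr) = \mathbf{Proj}_Y \mathcal{A}$, with the relative $\mathcal{O}(1)$ corresponding to $L$ up to this twist. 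Fixing a base point $y_0 \in \widetilde{Y}$ and letting $F$ be the corresponding fibre, we have $F \simeq \mathrm{Proj}\,R$, where $R := \bigoplus_m H^0(F, mL|_F)$ is the section ring of the very ample bundle $L|_F$.

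Next I would pass to the universal cover $\pi : \widetilde{Y} \to Y$. In the cartesian square defining $\widetilde{X} = X \times_Y \widetilde{Y}$ the map $\pi$ is \'etale, so base change gives $\widetilde{p}_\star(m\widetilde{L}) = \pi^\star E_m$ and $\widetilde{X} \simeq \mathbf{Proj}_{\widetilde{Y}} \pi^\star\mathcal{A}$, where $\widetilde{L}$ is the pullback of $L$. The crucial input is Theorem \ref{keyflat} $(2)$: each numerically flat bundle $E_m$ carries a canonical flat holomorphic (Gauss--Manin) connection $D_{E_m} : E_m \to E_m \otimes \Omega^1_Y$, whose flat sections are holomorphic. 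Since $\widetilde{Y}$ is simply connected, the pulled-back connection has trivial monodromy, so its global flat frame trivializes $\pi^\star E_m$ holomorphically and identifies it with the constant bundle $(E_m)_{y_0} \otimes \mathcal{O}_{\widetilde{Y}} = H^0(F, mL|_F) \otimes \mathcal{O}_{\widetilde{Y}}$.

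The heart of the argument, and the step I expect to be the main obstacle, is to check that these trivializations are compatible with the algebra structure, i.e. that the multiplication maps $E_a \otimes E_b \to E_{a+b}$ become the constant multiplication maps of $R$ in the flat frames. For this one uses that tensor products, duals, images and kernels of numerically flat bundles are again numerically flat, so that the numerically flat bundles form an abelian tensor subcategory of the category of local systems on which the canonical flat connection of Theorem \ref{keyflat} $(2)$ is functorial. Consequently every morphism of numerically flat bundles, in particular each multiplication map (which is $\mathcal{O}_Y$-linear and restricts fibrewise to the multiplication in $R$), is horizontal for the Gauss--Manin connections. Hence in the flat frames the multiplication is independent of the point of $\widetilde{Y}$, and $\pi^\star\mathcal{A} \simeq R \otimes_{\mathbb{C}} \mathcal{O}_{\widetilde{Y}}$ as graded $\mathcal{O}_{\widetilde{Y}}$-algebras.

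Finally I would take relative $\mathrm{Proj}$ of this constant algebra, obtaining $\widetilde{X} \simeq \mathbf{Proj}_{\widetilde{Y}}\bigl(R \otimes \mathcal{O}_{\widetilde{Y}}\bigr) \simeq (\mathrm{Proj}\,R) \times \widetilde{Y} \simeq F \times \widetilde{Y}$, which is exactly the asserted splitting. Local triviality of $p$ then follows by descent: for a small simply connected $U \subset Y$ the preimage $\pi^{-1}(U)$ is a disjoint union of copies of $U$, and restricting the splitting of $\widetilde{X}$ to one such copy identifies $p^{-1}(U) \simeq U \times F$. Thus $p$ is locally trivial and the splitting over $\widetilde{Y}$ holds, completing the proof.
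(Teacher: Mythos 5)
Your proof is correct, and it runs on the same engine as the paper's: numerical flatness makes each $E_m$ a local system (Theorem \ref{keyflat}), the simply connected cover trivializes these local systems by flat frames, and the decisive point is that an $\mathcal{O}_Y$-linear structural map between numerically flat bundles is horizontal for the canonical flat connections, so that in flat frames the data defining the fibration has constant coefficients. The difference is which data you make constant, and your choice is dual to the paper's: the paper embeds $X \hookrightarrow \mathbb{P}(E_1)$, forms the exact sequence $0 \rightarrow F_m \rightarrow \Sym^m E_1 \rightarrow E_m \rightarrow 0$ with $F_m$ numerically flat, and applies \cite[Lemma 4.3.3]{Cao13} to the inclusion $F_m \hookrightarrow \Sym^m E_1$, so that the defining equations of $\widetilde{X}$ in $\mathbb{P}^{n-1}\times\widetilde{Y}$ have constant coefficients; you instead reconstruct $X$ as $\mathbf{Proj}_Y\bigl(\bigoplus_m E_m\bigr)$ and make the multiplication maps $E_a \otimes E_b \rightarrow E_{a+b}$ constant. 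Your route is more intrinsic (no choice of embedding or ideal sheaf), at the cost of invoking relative Proj in the analytic category and cohomology-and-base-change to identify $(E_m)_y$ with $H^0(X_y, mL|_{X_y})$; both are fine but only guaranteed for $m \gg 0$, so you should pass to a Veronese subalgebra, just as the paper asserts its exact sequence only for $m$ large enough. One caution on the step you rightly flag as the heart: closure of numerically flat bundles under tensor products, duals, kernels and images does not by itself imply that every $\mathcal{O}_Y$-morphism between them is horizontal; what you need is precisely the statement that morphisms of numerically flat bundles preserve flat sections, i.e. the full faithfulness coming from Simpson's correspondence, which is exactly what the paper extracts from \cite[Lemma 4.3.3]{Cao13}. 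Citing that lemma (or proving the statement by the maximum-principle argument on the hermitian flat graded pieces) closes the only loose joint in your argument.
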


For the reader's convenience, we give the proof of it.
\begin{proof}
As $L$ is $p$-very ample, we have a $p$-relative embedding and $L = j^\star \mathcal{O}_{\mathbb{P} ( E_1 )} (1)$. 
$$
\xymatrix{
X \ar[rd]_p \ar@{^{(}->}[rr]^j & &\mathbb{P} (E_{1}) \ar[ld]^f\\
& Y}
$$ 
For $m$ large enough, we have the exact sequence 
\begin{equation}\label{exactseq1pr}
0 \rightarrow f_\star (\mathcal{I}_X \otimes \mathcal{O} _{\mathbb{P} (E_1)} (m)) \rightarrow 
f_\star ( \mathcal{O} _{\mathbb{P} (E_1)} (m)) \rightarrow p_\star (m L) \rightarrow 0 . 
\end{equation}
As $E_1$ is numerically flat, Theorem \ref{keyflat} implies that $E_1$ is a local system. Let $D_{E_1}$ be the flat connection 
with respect to this local system. We assume that $n=\rank E_1$. 
Since $\widetilde{Y}$ is simply connected, $\pi^\star E_1$ is a trivial vector bundle on $\widetilde{Y}$ 
and we can take some flat sections (with respect to $D_{E_1}$) 
$$\{ e_1, e_2, \cdots, e_n \} \subset H^0 (\widetilde{Y}, \pi^\star E_1)$$ 
such that $\{ e_1, e_2, \cdots, e_n \} $ generates $\pi^\star E_1$.

\smallskip

Set $ F_m := f_\star (\mathcal{I}_X \otimes \mathcal{O} _{\mathbb{P} (E_1)} (m))$. Since both 
$f_\star ( \mathcal{O} _{\mathbb{P} (E_1)} (m)) =\Sym^m E_1$ and $p_\star (m L)$ are numerically flat by assumption, $F_m$ is also numerically flat.
Then $F_m$ is a local system and let $D_{F_m}$ be the flat connection on it. 
Then $\pi^\star F_m$ is a trivial vector bundle, and we can take some flat sections (with respect to $D_{F_m}$) 
$$\{ s_1, s_2, \cdots, s_t \} \subset H^0 (\widetilde{Y} , \pi^\star F_m)$$ 
such that $\{ s_1, s_2, \cdots, s_t \} $ generates $\pi^\star F_m$, where $t$ is the rank of $F_m$.

Let $\varphi: \pi^\star f_\star (\mathcal{I}_X \otimes \mathcal{O} _{\mathbb{P} (E_1)} (m)) \rightarrow 
\pi^\star f_\star ( \mathcal{O} _{\mathbb{P} (E_1)} (m))=\pi^\star \Sym^m E_1$ be the inclusion induced by \eqref{exactseq1pr}.
Let $D_{\Sym^m E_1}$ be the flat connection on $\Sym^m \pi^\star E_1$ induced by $D_{E_1}$.
Thanks to \cite[Lemma 4.3.3]{Cao13}, we know that for every $i$, $\varphi (s_i)$ is flat with respect the connection $D_{\Sym^m E_1}$.
In particular, for every $i$, we can find constants $a_{i, \alpha}$ such that 
$$\varphi (s_i)= \sum_{\alpha= (\alpha_1 ,\cdots, \alpha_n), |\alpha|=m} a_{i, \alpha} \cdot e_1 ^{\alpha_1} e_2^{\alpha_2}\cdots e_n^{\alpha_n}.$$
In other words, the $p$-relative embedding of $\widetilde{X}$ in $\mathbb{P}^{n-1} \times   \widetilde{Y}$:
$$
\xymatrix{
\widetilde{X} \ar[d]^p \ar[r] & \mathbb{P}^{n-1} \times   \widetilde{Y} \ar[d]^p\\
\widetilde{Y} \ar[r] & \widetilde{Y} }
$$
is defined by the polynomials $\varphi (s_i)$ whose coefficients are independent of $\widetilde{Y}$.
Then $p$ is locally trivial and we have the splitting
$\widetilde{X}\simeq \widetilde{Y} \times F$,
where $F$ is the generic fibre of $p$. 
\end{proof}

\medskip

In the second part of this section, we would like to 
recall some results about the positivity of direct images. For more details, 
we refer to \cite{BP08,BP10,Fuj16,Hor10,Kaw82,Kaw98,Kol85,PT14,Tsu10,Vie83,Vie95} to quote only a few.

\medskip

To be begin with, we first recall the definition of possibly singular hermitian metrics. We refer to \cite{Dem12} for more details.
Let $X$ be a projective manifold and let $L\rightarrow X$ be a line bundle on $X$ endowed
with a Hermitian metric $h_L$. We make the convention that, unless explicitly mentioned otherwise, the metrics in this article are allowed to be singular. 
Let $\Omega \subset X$ be any trivialization open set for $L$ and $e_L$ be a basis of $L$ over $\Omega$. Then
$$|e_L|_{h_L} ^2 = e^{-\varphi}$$ 
for some function $\varphi \in L^1 _{\loc} (\Omega)$. We say that $\varphi$ is the weight of $h_L$. 
Thanks to the Lelong-Poincar\'{e} formula, we know that
\begin{equation}\label{LP}
\frac{i}{\pi}\Theta_{h_L}(L) = dd^c \varphi . 
\end{equation}

We now recall the definition of the multiplier ideal sheaves cf. \cite[5.B]{Dem12} for more details.
Let $m\in \mathbb{N}$. Let $\mathcal{J} (h_L ^{\frac{1}{m}})\subset \mathcal{O}_X$ be the germs of holomorphic function $f\in \mathcal{O}_{X, x}$
such that $|f|^2 e^{-\frac{\varphi}{m}}$ is integrable near $x$. It is well known that $\mathcal{J} (h_L ^{\frac{1}{m}})$
is a coherent sheaf. If $\frac{i}{\pi}\Theta_{h_L}(L) \geq 0$ in the sense of current, thanks to \eqref{LP}, the weight $\varphi$ is a psh function.
Therefore, for $m\in\mathbb{N}$ large enough, we have $\mathcal{J} (h_L ^{\frac{1}{m}}) =\mathcal{O}_X$.

\medskip

The following result is a very special version of the standard Ohsawa-Takegoshi type extension theorem.
We refer to for example \cite{Man93, Dem12} among many others for the more general versions.

\begin{theorem}\label{OTver}\cite{Man93, Dem12}
Let $p : X\rightarrow Y$ be a fibration between two projective manifolds and let $L_Y$ 
be a very ample line bundle on $Y$ such that the global sections of $L_Y$ separates all $2 n$-jets,
where $n$ is the dimension of $Y$.
Let $L$ be a pseudo-effective line bundle on $X$ with a possibly singular hermitian metric $h$ such that $i\Theta_h (L) \geq 0$ on $X$.
Let $y\in Y$ be a generic point. Then the following restriction 
$$H^0 (X, \mathcal{O}_X (K_X + L+ p^\star L_Y ) \otimes \mathcal{J} (h)) \rightarrow H^0 (X_y, \mathcal{O}_{X_y} (K_X + L+ p^\star L_Y) \otimes \mathcal{J} (h |_{X_y})) $$
is surjective.
\end{theorem}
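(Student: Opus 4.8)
The plan is to deduce this from the general Ohsawa--Takegoshi extension theorem of \cite{Man93, Dem12} by exhibiting the fibre $X_y$ as the singular locus of a suitable positively curved metric on $p^\star L_Y$, and then reading off the bundle on $X_y$ by adjunction. Throughout I take $y$ to be generic, so that by generic smoothness $X_y$ is a smooth submanifold of codimension $n = \dim Y$, and so that the restriction formula $\mathcal{J}(h)|_{X_y} = \mathcal{J}(h|_{X_y})$ for multiplier ideals of psh weights holds (it holds for all $y$ outside a pluripolar set). This genericity is what makes the source and target of the restriction map the correct multiplier-ideal subspaces.

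The heart of the argument is the construction of a singular metric $h_Y$ on $L_Y$ with $i\Theta_{h_Y}(L_Y) \geq 0$ whose weight $\psi_Y$ has an isolated logarithmic pole at $y$. Fixing local coordinates $w = (w_1, \dots, w_n)$ on $Y$ centred at $y$, I would use the hypothesis that the global sections of $L_Y$ separate $2n$-jets at $y$ to select sections $\sigma_\alpha \in H^0(Y, L_Y)$ realizing the monomials $w^\alpha$ with $|\alpha| = n$, and set $\psi_Y = \log \sum_{|\alpha| = n} |\sigma_\alpha|^2$ (modified away from $y$ using the very ampleness of $L_Y$ so that the pole is isolated). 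This weight is globally psh, being the pullback of a Fubini--Study type metric, is smooth and bounded away from $y$, and satisfies $\psi_Y = n \log |w|^2 + O(1)$ near $y$; the point of separating jets up to order $2n$, rather than merely $n$, is to guarantee that this asymptotic is clean --- Lelong number exactly $2n$ and no contamination by the higher-order error terms --- which is precisely the strength needed to extend $L^2$ sections across the codimension-$n$ locus. Pulling back, $p^\star \psi_Y$ is a psh weight on $p^\star L_Y$ whose singularity is concentrated along $X_y$.

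Next I would put the metric $h \otimes p^\star h_Y$ on $\widetilde{L} := L + p^\star L_Y$; its curvature current is $\geq 0$ since both factors are semipositive. Feeding this into the general Ohsawa--Takegoshi theorem of \cite{Man93, Dem12}, with the weight $p^\star \psi_Y$ cutting out $X_y$, any section in $H^0(X_y, \mathcal{O}_{X_y}(K_X + \widetilde{L}) \otimes \mathcal{J}(h|_{X_y}))$ extends to a section in $H^0(X, \mathcal{O}_X(K_X + \widetilde{L}) \otimes \mathcal{J}(h))$ with an $L^2$ bound. Here one must check that the bundle data is consistent on $X_y$: since $y$ is a point, $(p^\star L_Y)|_{X_y} \simeq \mathcal{O}_{X_y}$, while the conormal bundle of $X_y$ is $p^\star \Omega_Y^1|_{X_y} \simeq \mathcal{O}_{X_y}^{\oplus n}$, so adjunction gives $K_X|_{X_y} \simeq K_{X_y}$ and hence $(K_X + \widetilde{L})|_{X_y} \simeq K_{X_y} + L|_{X_y}$. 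Thus the restriction map above is the one in the statement, and surjectivity follows.

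The main obstacle is the analytic content concentrated in the second step: producing $h_Y$ with a pole of the exact strength and verifying the Ohsawa--Takegoshi hypotheses in codimension $n$ with this weight. This is where both the $2n$-jet separation and the genericity of $y$ are indispensable --- the jet hypothesis calibrates the pole along $X_y$ so that it is neither too weak (which would obstruct extension of some nonzero section) nor too strong (which would force $\mathcal{J}(h)$ to vanish along $X_y$ and kill the very sections one wants to lift), and the genericity guarantees $\mathcal{J}(h)|_{X_y} = \mathcal{J}(h|_{X_y})$ so that the estimate produced by the extension theorem lands in the asserted cohomology group.
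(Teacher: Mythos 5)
The paper itself gives no proof of this statement --- it is quoted from \cite{Man93, Dem12} as ``a very special version of the standard Ohsawa--Takegoshi type extension theorem'' --- so your proposal can only be measured against the standard argument in those references. Your overall route is indeed that standard one: build a positively curved singular metric on $L_Y$ with a log pole at $y$, pull it back to cut out $X_y$, twist $h$ by it, and invoke the general extension theorem. But there is a genuine gap at the central analytic step, and it sits exactly where you claim to explain the $2n$-jet hypothesis. Your weight $\psi_Y=\log\sum_{|\alpha|=n}|\sigma_\alpha|^2$ uses sections vanishing to order $n$, so it does produce the correct pole (coefficient $n$, the log-canonical threshold for the codimension-$n$ fibre), but it consumes the \emph{entire} curvature of $L_Y$: the resulting metric $h\otimes p^\star h_Y$ on $L+p^\star L_Y$ satisfies only $i\Theta\geq 0$. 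No version of the Ohsawa--Takegoshi--Manivel theorem applies under this hypothesis alone. All of them (Manivel's vector-bundle form, Demailly's quasi-psh weight form, the optimal versions) demand a strict margin between the total curvature and the part absorbed by the pole-designating weight $\psi$, of the shape $i\Theta_h(L)+p^\star\omega_Y+(1+\delta)\, i\partial\overline{\partial}\psi\geq 0$ for some $\delta>0$ (equivalently, domination of $\{i\Theta(E)s,s\}/|s|^2$ in the bundle formulation). For your weight one only has $i\partial\overline{\partial}\psi\geq -p^\star\omega_Y$, and this bound is sharp, so the margin condition reduces to $i\Theta_h(L)-\delta\, p^\star\omega_Y\geq 0$, which fails (in base directions) wherever the Fubini--Study-type current of your sections degenerates and $i\Theta_h(L)=0$. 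Hence ``feeding this into the general Ohsawa--Takegoshi theorem'' is not justified as written.

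Relatedly, your diagnosis of the $2n$-jet hypothesis (``to make the asymptotic clean, no contamination by higher-order terms'') is incorrect: separation of $n$-jets already yields the clean asymptotic $n\log|w|^2+O(1)$, since the degree-$(n+1)$ error terms are dominated near $y$. The true role of $2n$ is precisely to create the missing curvature margin. One chooses $\sigma_\alpha$ realizing the monomials of degree $2n$ and endows $L_Y$ with the metric whose weight is $\tfrac12\log\sum_{|\alpha|=2n}|\sigma_\alpha|^2+\tfrac12\varphi^{\mathrm{sm}}$, where $\varphi^{\mathrm{sm}}$ is a smooth weight with $i\partial\overline{\partial}\varphi^{\mathrm{sm}}=\omega_Y>0$. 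This is still a metric on the single bundle $L_Y$; it has a log pole of coefficient exactly $\tfrac12\cdot 2n=n$ at $y$ (so the pullback cuts out $X_y$ at log-canonical strength, and your multiplier-ideal bookkeeping survives), and its curvature is $\geq\tfrac12\omega_Y>0$. That strictly positive half is exactly the $\delta$-margin the extension theorem requires; $n$-jet separation can never deliver both properties simultaneously. Two smaller remarks: the extra components of the polar set do not need to be removed at all --- by the jet asymptotics $y$ is isolated in the common zero locus, so those components are disjoint from $X_y$, one extends by zero there, and extra poles only shrink $\mathcal{J}$ on the source side, which is harmless for surjectivity; your proposed fix via ``very ampleness'' is in any case dubious, since any section added to isolate the pole must still vanish to order $\geq n$ at $y$, which single-point jet separation does not provide. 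Your Step 1 (genericity giving $\mathcal{J}(h)|_{X_y}=\mathcal{J}(h|_{X_y})$) and the adjunction bookkeeping in your final step are fine.
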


\medskip

We will use the following theorem to study the positivity of direct images in this article. 
It is a consequence of \cite{BP08, BP10, PT14}.

\begin{theorem}\cite{BP10,PT14}\label{extension}
Let $p: X \rightarrow Y$ be a fibration
between two projective manifolds and let $L$ be a pseudo-effective line bundle on $X$ with a possibly singular metric $h_L$ such that
$i \Theta_{h_L} (L) \geq 0$ in the sense of current. 
Let $m$ be a positive number such that $\mathcal{J} (h_L ^{\frac{1}{m}} |_{X_y}) =\mathcal{O}_{X_y}$
for a generic fibre $X_y$.
If $p_\star (m K_{X/Y} +L) \neq 0$, then $\det p_\star (m K_{X/Y} +L)$ is a pseudo-effective line bundle on $Y$.

\smallskip

Moreover, let $A_Y$ be a very ample line bundle on $Y$ such that the global sections of $A_Y -K_Y$ separates all $2 n$-jets,
where $n$ is the dimension of $Y$.
Then the restriction
\begin{equation}\label{surjlee}
H^0 (X, m K_{X/Y} + L  +p^\star A_Y) \rightarrow H^0 (X_y ,  m K_{X/Y} + L + p^\star A_Y) 
\end{equation}
is surjective for a generic $y \in Y$.
\end{theorem}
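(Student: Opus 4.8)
The plan is to derive both assertions from the positivity of the relative $m$-Bergman kernel data attached to $mK_{X/Y}+L$, which is the analytic core of \cite{BP10, PT14}. First I would restrict to the Zariski open set $Y_0\subset Y$ over which $p$ is a submersion and $E:=p_\star(mK_{X/Y}+L)$ is locally free of the generic rank, with $X_0:=p^{-1}(Y_0)$. Over a smooth fibre one has $(mK_{X/Y}+L)|_{X_y}=mK_{X_y}+L|_{X_y}$, and the hypothesis $\mathcal{J}(h_L^{\frac{1}{m}}|_{X_y})=\mathcal{O}_{X_y}$ guarantees that every $\sigma\in H^0(X_y,(mK_{X/Y}+L)|_{X_y})$ has finite $L^{2/m}$-norm against $h_L$. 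This makes two objects available: the $m$-Bergman metric on $E|_{Y_0}$ coming from the fibrewise $L^{2/m}$-construction, and the relative $m$-Bergman kernel metric $h_B$ on the line bundle $mK_{X/Y}+L$ over $X_0$.

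For the first assertion, the key input I would borrow from \cite{BP10, PT14} is that the $m$-Bergman metric is a positively curved singular Hermitian metric on $E$; their statements are formulated over all of $Y$, so the degeneration locus $Y\setminus Y_0$ is already accounted for. Granting this, $\det E$ carries a singular metric with semipositive curvature current, so (as $E\neq 0$) its local weights are plurisubharmonic on $Y$ and $i\Theta(\det p_\star(mK_{X/Y}+L))\geq 0$ as a current, which is exactly pseudo-effectivity. If one prefers to argue only on $Y_0$, these psh weights are locally bounded above near $Y\setminus Y_0$ (being comparable to a smooth reference metric away from the total degeneration), hence extend across this set, of codimension $\geq 1$, by the standard removable-singularity theorem for psh functions.

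For the surjectivity statement I would invoke the Ohsawa--Takegoshi theorem in the form of Theorem \ref{OTver}. Using $K_X=K_{X/Y}+p^\star K_Y$ one rewrites
\[
mK_{X/Y}+L+p^\star A_Y \;=\; K_X + \bigl[(m-1)K_{X/Y}+L\bigr] + p^\star(A_Y-K_Y),
\]
which is precisely the shape to which \ref{OTver} applies, with $A_Y-K_Y$ playing the role of the jet-separating very ample bundle $L_Y$. When $m=1$ the bracket is just $L$, already semipositive, and the extension is a direct application of \ref{OTver}. For $m\geq 2$ I would equip $(m-1)K_{X/Y}+L$ with the metric $h_B^{\frac{m-1}{m}}\cdot h_L^{\frac{1}{m}}$: its curvature is $\frac{m-1}{m}\,i\Theta(h_B)+\frac{1}{m}\,i\Theta_{h_L}(L)\geq 0$, and by construction its restriction to a generic fibre has trivial multiplier ideal, so \ref{OTver} again extends every fibre section of $mK_{X/Y}+L+p^\star A_Y$ to $X$, giving surjectivity of the restriction.

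The hard part will be the positivity of the relative $m$-Bergman kernel data for $m\geq 2$, namely the semipositivity of $i\Theta(h_B)$ on $X_0$ and of the $m$-Bergman metric on $E$, together with the control of both objects across the locus where the fibres degenerate or the rank of the direct image jumps. This is the technical heart of \cite{BP10, PT14}, which I would cite rather than reprove; in a self-contained account the remaining steps — the reduction to the submersive locus, the fact that $\det E$ extends to a line bundle on $Y$, and the psh-extension across $Y\setminus Y_0$ — are comparatively routine.
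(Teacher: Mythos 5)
Your proposal follows essentially the same route as the paper's proof: the relative $m$-Bergman kernel metric $h_{m,B}$ of \cite{BP10} on $mK_{X/Y}+L$, the rewriting $m K_{X/Y} + L + p^\star A_Y = K_X + \bigl((m-1)K_{X/Y}+L\bigr) + p^\star (A_Y - K_Y)$ with the bracket carrying the metric $h_{m,B}^{\frac{m-1}{m}}\cdot h_L^{\frac{1}{m}}$ (this is exactly the paper's $\widetilde{L}$ and $h$), Theorem \ref{OTver} for the surjectivity, and a citation of the technical positivity results of \cite{BP10,PT14} (the paper invokes \cite[Thm 3.3.5]{PT14} together with \cite[Cor 2.9]{CP17}) for the pseudo-effectivity of the determinant.

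There is, however, one step that is wrong as written. You justify the application of Theorem \ref{OTver} by claiming that the restriction of $h = h_{m,B}^{\frac{m-1}{m}}\cdot h_L^{\frac{1}{m}}$ to a generic fibre has trivial multiplier ideal. That is not true in general: the Bergman weight $\varphi_m$ tends to $-\infty$ along the base locus of the fibrewise linear system, and nothing prevents $e^{-\frac{m-1}{m}\varphi_m-\frac{1}{m}\varphi_L}$ from being non-integrable there (e.g.\ along a divisorial base component of multiplicity $\geq 2$), so $\mathcal{J}(h|_{X_y})$ may be strictly smaller than $\mathcal{O}_{X_y}$. What the argument actually requires — and what the paper proves — is weaker and always holds: since Theorem \ref{OTver} extends sections lying in $H^0\bigl(X_y, \cdot \otimes \mathcal{J}(h|_{X_y})\bigr)$, it suffices that every global section $s$ of $(mK_{X/Y}+L)|_{X_y}$ has finite $h$-norm. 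This follows from the extremal characterization of the Bergman kernel, which gives a $C^0$-bound on $|s|^2 e^{-\varphi_m}$, whence $|s|_h^2 \leq C\, |s|^{\frac{2}{m}} e^{-\frac{\varphi_L}{m}}$, integrable on $X_y$ precisely by the hypothesis $\mathcal{J}(h_L^{\frac{1}{m}}|_{X_y}) = \mathcal{O}_{X_y}$. You already have this ingredient (your finite $L^{2/m}$-norm remark in the first paragraph); it must be used in place of the multiplier-ideal claim. The same norm bound is also what makes your first assertion rigorous: the paper uses it to show that the inclusion $p_\star\bigl(\mathcal{O}_X(K_{X/Y}+\widetilde{L})\otimes \mathcal{J}(h)\bigr) \subset p_\star(K_{X/Y}+\widetilde{L})$ is generically an isomorphism, so that the positivity results of \cite{PT14}, which concern the multiplier-ideal-twisted direct image, indeed yield pseudo-effectivity of $\det p_\star(mK_{X/Y}+L)$ itself.
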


\begin{remark}
Note that the choice of $A_Y$ depends only on $Y$ and is independent of the fibration $p: X\rightarrow Y$, $L$ and $m$. 
This will be crucial in our article.
\end{remark}

\begin{proof}
We explain briefly the proof. Since $p_\star (m K_{X/Y} +L) \neq 0$, by \cite[A.2.1]{BP10}, there exists a $m$-relative Bergman type metric $h_{m,B}$ on $m K_{X/Y} + L$ 
with respect to $h_L$ such that $i\Theta_{h_{m, B}} (m K_{X/Y}+L) \geq 0$. 
Then $h:= \frac{m-1}{m}h_{m,B} + \frac{1}{m} h_L$ defines a possibly singular metric on
$$\widetilde{L} : = \frac{m-1}{m} (m K_{X/Y} + L) + \frac{1}{m} L ,$$
with $i\Theta_h (\widetilde{L} ) \geq 0$. By construction, we have
\begin{equation}\label{equality}
m K_{X/Y} + L  = K_{X/Y} + \widetilde{L}. 
\end{equation}

\medskip

Let $y\in Y$ be a generic point and let $\varphi_m$ be the weight
of $h_{m,B}$.
Then for every $s \in H^0 (X_y ,  m K_{X/Y} + L )$, by the construction of the $m$-relative Bergman kernel metric, 
$|s|^2 e^{-\varphi_m}$ is $C^0$-bounded. 
Combining this with the assumption $\mathcal{J} (h_L ^{\frac{1}{m}} |_{X_y}) =\mathcal{O}_{X_y}$, we know that 
\begin{equation}\label{normcon}
\int_{X_y} |s|_h ^2 < +\infty. 
\end{equation}
Therefore the inclusion
$$p_\star (\mathcal{O}_X (K_{X/Y}+ \widetilde{L}) \otimes \mathcal{J} (h)) \subset p_\star (K_{X/Y} +\widetilde{L})$$
is generically isomorphic. By applying \cite[Thm 3.3.5]{PT14} and \cite[Cor 2.9]{CP17}, 
we know that
$\det p_\star (K_{X/Y} +\widetilde{L})$ is pseudo-effective. Therefore $\det p_\star (m K_{X/Y} +L)$ is pseudo-effective.
\medskip

To prove the surjectivity of \eqref{surjlee}, we can first assume that 
$$H^0 (X_y ,  m K_{X/Y} + L + p^\star A_Y) \neq 0 \qquad\text{for a generic } y\in Y ,$$
which is equivalent to say that $p_\star (m K_{X/Y} +L) \neq 0$. Note that 
$$K_X + \widetilde{L} +p^\star (A_Y - K_Y) = m K_{X/Y} +L + p^\star (A_Y).$$
By applying Theorem \ref{OTver} to the line bundle $K_X + \widetilde{L} +p^\star (A_Y - K_Y)$, 
thanks to \eqref{normcon}, we know that $s$ be can extended to a section in $H^0 (X, m K_{X/Y} + L +p^\star A_Y)$.
In other words, the restriction \eqref{surjlee} is surjective. 
\end{proof}

We need two slight generalizations of the above theorem. The first is a direct consequence of Theorem \ref{extension} and the argument in 
\cite[Lemma 5.4]{CP17}.
\begin{proposition}\label{lowercontr}
Let $p: X \rightarrow Y$ be a fibration
between two projective manifolds and let $L$ be a line bundle on $X$ with a possibly singular metric $h_L$ such that
$i \Theta_{h_L} (L) \geq  p^\star \alpha$ in the sense of current for some smooth $d$-closed $(1,1)$-form $\alpha$ on $Y$. 
Let $m$ be a positive number such that $\mathcal{J} (h_L ^{\frac{1}{m}} |_{X_y}) =\mathcal{O}_{X_y}$
for a generic fibre $X_y$ and $p_\star (m K_{X/Y} +L) \neq 0$. 
Then $h_L$ induces a metric $\widehat{h}$ on $\det p_\star (m K_{X/Y} +L)$ 
such that
$$i\Theta_{\widehat{h}} (\det p_\star (m K_{X/Y} +L)) \geq r \cdot \alpha \qquad\text{ on } Y $$
in the sense of current, where $r=\rank p_\star (m K_{X/Y} +L)$.
\end{proposition}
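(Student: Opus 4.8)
Proposition \ref{lowercontr} is a refinement of Theorem \ref{extension}: whereas the theorem only extracts pseudo-effectivity (the case $\alpha=0$), here the curvature hypothesis $i\Theta_{h_L}(L)\geq p^\star\alpha$ is strengthened to a pullback of a $d$-closed $(1,1)$-form $\alpha$ from the base, and I want to propagate this lower bound to the determinant of the direct image, with the expected factor $r=\rank p_\star(mK_{X/Y}+L)$.

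**The plan.** The natural strategy is a reduction to the pseudo-effective case (Theorem \ref{extension}) by twisting away $\alpha$. Since $\alpha$ is a smooth $d$-closed real $(1,1)$-form on $Y$, its class $\{\alpha\}$ need not be rational, so I cannot simply subtract an honest line bundle pulled back from $Y$. First I would handle the case where $\{\alpha\}$ is the curvature of a genuine (possibly singular) metric on a $\bQ$-line bundle $G_Y$ on $Y$: then I replace $L$ by $L - p^\star G_Y$, equipped with the metric $h_L \cdot p^\star(h_{G_Y})^{-1}$, whose curvature is $\geq 0$, and apply Theorem \ref{extension} to the fibration $p$ and this new line bundle. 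The key compatibility is that $mK_{X/Y} + (L - p^\star G_Y) = (mK_{X/Y}+L) - p^\star G_Y$, and by the projection formula $p_\star\bigl((mK_{X/Y}+L)-p^\star G_Y\bigr) = p_\star(mK_{X/Y}+L)\otimes(-G_Y)$, so the determinant of the twisted direct image is $\det p_\star(mK_{X/Y}+L)\otimes(-rG_Y)$. Applying the pseudo-effectivity conclusion of Theorem \ref{extension} to the twist gives that this determinant carries a metric with nonnegative curvature; untwisting by $p^\star(h_{G_Y})^{\otimes r}$ then produces a metric $\widehat h$ on $\det p_\star(mK_{X/Y}+L)$ with $i\Theta_{\widehat h}\geq r\,\alpha$, as desired.

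**The main obstacle and how to resolve it.** The hard part is precisely that $\alpha$ is only a smooth closed form, not the curvature of a line bundle, so the naive twist is not available over $X$ itself. The remedy, following \cite[Lemma 5.4]{CP17}, is to work not with $L$ but with the relative Bergman construction directly and track the metric on the determinant through the proof of Theorem \ref{extension}, feeding the extra positivity $p^\star\alpha$ through the curvature estimate. Concretely: the proof of Theorem \ref{extension} equips $\widetilde L := \tfrac{m-1}{m}(mK_{X/Y}+L)+\tfrac1m L$ with the metric $h=\tfrac{m-1}{m}h_{m,B}+\tfrac1m h_L$, and then invokes the positivity-of-direct-images results \cite[Thm 3.3.5]{PT14}, \cite[Cor 2.9]{CP17}. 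Those results are robust enough to carry a lower curvature bound: since $i\Theta_{h_L}(L)\geq p^\star\alpha$ and $h_{m,B}$ has nonnegative curvature, one checks $i\Theta_h(\widetilde L)\geq \tfrac1m p^\star\alpha$, and more usefully that the Bergman metric construction upgrades the $m$-positivity so that the resulting metric on $\det p_\star(K_{X/Y}+\widetilde L)=\det p_\star(mK_{X/Y}+L)$ acquires the contribution $r\,\alpha$ (one $\alpha$ per rank, from taking the determinant of a rank-$r$ bundle). The factor $r$ emerges exactly because the curvature of a determinant is the trace of the curvature of the bundle, and each of the $r$ directions contributes the base-direction lower bound $\alpha$.

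**Summary of steps.** In order: (i) recall from the proof of Theorem \ref{extension} the metric $h$ on $\widetilde L$ and the generic isomorphism $p_\star(\cO_X(K_{X/Y}+\widetilde L)\otimes\mathcal J(h))\cong p_\star(K_{X/Y}+\widetilde L)$; (ii) verify that the curvature hypothesis $i\Theta_{h_L}(L)\geq p^\star\alpha$ passes to a lower bound for $i\Theta_h(\widetilde L)$ in the horizontal (base) directions; (iii) apply the quantitative form of \cite[Thm 3.3.5]{PT14} and \cite[Cor 2.9]{CP17} — exactly as used in \cite[Lemma 5.4]{CP17} — to obtain the metric $\widehat h$ on $\det p_\star(mK_{X/Y}+L)$ with the trace-induced factor $r$; (iv) conclude $i\Theta_{\widehat h}\geq r\,\alpha$. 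I expect step (iii), namely extracting the sharp factor $r$ and confirming that the direct-image positivity theorems admit this relative lower-bound refinement, to be where the real content lies; the rest is bookkeeping with the projection formula and the additivity of curvature under tensor and determinant operations.
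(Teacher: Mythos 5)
Your overall strategy --- reduce to Theorem \ref{extension} by twisting away $\alpha$, with the factor $r$ produced by taking the determinant of a rank-$r$ bundle --- is exactly the route the paper intends (the paper's proof is precisely the citation of Theorem \ref{extension} plus the argument of \cite[Lemma 5.4]{CP17}). However, your handling of the actual sticking point, namely that $\{\alpha\}$ need not be rational (or even lie in the real N\'eron--Severi space of $Y$), has a genuine gap. First, your step (ii) rests on the claim that the relative Bergman metric $h_{m,B}$ has nonnegative curvature; \cite[A.2.1]{BP10} guarantees this only when $i\Theta_{h_L}(L)\geq 0$, which is exactly not the hypothesis here --- if $\alpha$ is negative in some directions, nothing you have quoted gives $i\Theta_{h_{m,B}}\geq 0$ (what is true is $i\Theta_{h_{m,B}}\geq p^\star\alpha$, but that itself requires the argument you are missing). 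Second, your step (iii) invokes a ``quantitative form'' of \cite[Thm 3.3.5]{PT14} and \cite[Cor 2.9]{CP17} that propagates the lower bound $p^\star\alpha$ to the determinant with the factor $r$; such a quantitative statement is essentially the proposition being proved, so as written the argument is circular.

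The missing idea --- and the actual content of the argument in \cite[Lemma 5.4]{CP17} --- is localization. The asserted inequality $i\Theta_{\widehat h}\geq r\alpha$ is a local statement on $Y$, and every smooth $d$-closed $(1,1)$-form admits a local potential: on a small open set $U\subset Y$ write $\alpha = dd^c\psi$. Over $p^{-1}(U)$ replace the weight $\varphi$ of $h_L$ by $\varphi - p^\star\psi$; the new curvature is $\geq 0$, so the construction underlying Theorem \ref{extension} (i.e.\ the canonical $L^2$/Bergman-type metric of \cite[Thm 3.3.5]{PT14}, \cite[Cor 2.9]{CP17}, which is what ``$h_L$ induces $\widehat h$'' means) yields a metric with semipositive curvature on $\det p_\star(mK_{X/Y}+L)|_U$. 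Since $\psi$ is constant along the fibres, this twist rescales the canonical metric on the rank-$r$ bundle $p_\star(mK_{X/Y}+L)|_U$ by the conformal factor $e^{\psi}$, hence rescales its determinant metric by $e^{r\psi}$; untwisting gives $i\Theta_{\widehat h}\geq r\,dd^c\psi = r\alpha$ on $U$. Because $\widehat h$ is canonical --- it does not depend on the choice of $U$ or $\psi$ --- these local inequalities glue to the global one. Note that this is nothing but your own first-paragraph argument run with the line bundle $G_Y$ replaced by the local weight $\psi$: once you observe the problem is local, no rationality of $\{\alpha\}$ and no upgraded version of the direct-image theorems is needed. (As a side remark, in the paper's only application, Proposition \ref{addedlemma}, one has $\alpha = -\epsilon\,\omega_Y$ with $\omega_Y\in c_1(A_Y)$, so there your global $\mathbb{R}$-line-bundle twist would in fact suffice; but it does not prove the proposition as stated.)
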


\begin{proposition}\label{posdir}
Let $p: X \rightarrow Y$ be a fibration
between two projective manifolds and let $A_Y$ be a very ample line bundle on $Y$ in Theorem \ref{extension}.
Let $F$ be a pseudo-effective line bundle on $X$ with a possibly singular metric $h_F$ such that
$i\Theta_{h_F} (F) \geq 0$ in the sense of current
and let $m \in \mathbb{N}$ be a number such that 
$\mathcal{J} (h_F ^{\frac{1}{m}} |_{X_y}) =\mathcal{O}_{X_y}$ for a generic fibre $X_y$.

Let $Q \geq 0$ be some effective divisor on $X$ such that the support of $Q$ does not meet the general fibre of $p$.
Let $N$ be a line bundle such that $N+ \epsilon F + p^\star A_Y$ is semipositive
\footnote{It means that there exists a smooth hermitian metric such that the curvature is semipositive. In particular, if
$N+ \epsilon F + p^\star A_Y$ is $\mathbb{R}$-semiample, then it is semipositive.} 
for some $0<\epsilon <1$.
Then the restriction 
$$H^0 (X,  m K_{X/Y} + N +F + Q  + 2 p^\star A_Y) \rightarrow H^0 (X_y , m K_{X/Y} + N+ F +Q + 2 p^\star A_Y) $$
is surjective.
\end{proposition}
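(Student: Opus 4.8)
The plan is to deduce the proposition directly from Theorem \ref{extension}, applied to the pseudo-effective line bundle
$$L := N + F + Q + p^\star A_Y.$$
With this choice one has $m K_{X/Y} + L + p^\star A_Y = m K_{X/Y} + N + F + Q + 2 p^\star A_Y$, so the surjectivity furnished by Theorem \ref{extension} for $L$ is exactly the surjectivity we must prove. Everything then reduces to equipping $L$ with a singular metric $h_L$ satisfying $i\Theta_{h_L}(L) \geq 0$ and $\mathcal{J}(h_L^{1/m}|_{X_y}) = \mathcal{O}_{X_y}$ on a generic fibre, and to checking the nonvanishing $p_\star(m K_{X/Y} + L) \neq 0$.

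To construct $h_L$ I would use the decomposition
$$L = (N + \epsilon F + p^\star A_Y) + (1-\epsilon) F + Q.$$
The first summand carries, by hypothesis, a smooth metric $h_0$ with $i\Theta_{h_0} \geq 0$; on the second I put $(1-\epsilon) h_F$, using the given metric $h_F$ on $F$; and on $\mathcal{O}_X(Q)$ I take the canonical singular metric $h_Q$ associated with the section cutting out the effective divisor $Q$, whose curvature current is $[Q] \geq 0$. Then $h_L := h_0 + (1-\epsilon) h_F + h_Q$ is a metric on $L$ with
$$i\Theta_{h_L}(L) = i\Theta_{h_0} + (1-\epsilon)\, i\Theta_{h_F}(F) + [Q] \geq 0,$$
so $L$ is pseudo-effective.

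The key point, and the only place where the two hypotheses on $Q$ and $\epsilon$ enter, is the verification of $\mathcal{J}(h_L^{1/m}|_{X_y}) = \mathcal{O}_{X_y}$ for generic $y$. The restricted weight splits as $\varphi_0|_{X_y} + (1-\epsilon)\varphi_F|_{X_y} + \varphi_Q|_{X_y}$. The contribution $\varphi_0|_{X_y}$ is smooth, hence harmless. Since $\Supp Q$ does not meet the general fibre, $\varphi_Q$ is smooth along $X_y$ and contributes nothing to the multiplier ideal. Finally, because $0 < \epsilon < 1$, the function $e^{-(1-\epsilon)\varphi_F/m}$ is no more singular than $e^{-\varphi_F/m}$, which is locally integrable on $X_y$ by the hypothesis $\mathcal{J}(h_F^{1/m}|_{X_y}) = \mathcal{O}_{X_y}$; so the $F$-term is integrable as well. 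Hence $e^{-\varphi_L/m}$ is locally integrable along $X_y$, giving $\mathcal{J}(h_L^{1/m}|_{X_y}) = \mathcal{O}_{X_y}$.

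To finish, as in the proof of Theorem \ref{extension} I may assume the target $H^0(X_y, m K_{X/Y} + N + F + Q + 2 p^\star A_Y)$ is nonzero for generic $y$, the statement being trivial otherwise. Restriction to a fibre trivializes $p^\star A_Y$, so this group coincides with $H^0(X_y, (m K_{X/Y} + L)|_{X_y})$; nonvanishing therefore yields $p_\star(m K_{X/Y} + L) \neq 0$. Theorem \ref{extension} now applies to $L$ and gives the surjectivity of the restriction of $m K_{X/Y} + L + p^\star A_Y = m K_{X/Y} + N + F + Q + 2 p^\star A_Y$, which is the desired conclusion. I expect the integrability check in the previous paragraph to be the main obstacle; the rest is a formal reduction to Theorem \ref{extension}.
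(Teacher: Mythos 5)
Your proposal is correct and follows essentially the same route as the paper: the same decomposition $L = (N + \epsilon F + p^\star A_Y) + (1-\epsilon)F + Q$, the same combined metric $h_0 + (1-\epsilon)h_F + h_Q$, the same verification of $\mathcal{J}(h_L^{1/m}|_{X_y}) = \mathcal{O}_{X_y}$ using that $\Supp Q$ misses the generic fibre, and the same reduction to Theorem \ref{extension}. Your extra care about the nonvanishing $p_\star(mK_{X/Y}+L)\neq 0$ is harmless and matches how the paper handles it inside the proof of Theorem \ref{extension} itself.
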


\begin{proof}
As $N + p^\star A_Y + \epsilon F$ is semipositive, it can be equipped with a smooth metric $h_1$ with semi-positive curvature.
Let $h_Q$ be a singular metric on $Q$ such that $i\Theta_{h_Q} (Q) =[Q]$.
Then $h=h_1 + (1-\epsilon) h_F +h_Q$ defines a metric on the line bundle 
$$N + F  + Q+   p^\star A_Y = ( N +\epsilon F + p^\star A_Y)+  (1-\epsilon) F +Q$$
with $i\Theta_h (N + F  + Q + p^\star A_Y) \geq 0$. Since  $\mathcal{J} (h_F ^{\frac{1}{m}} |_{X_y}) =\mathcal{O}_{X_y}$ and $p (Q) \subsetneq Y$,
we have $\mathcal{J} (h^{\frac{1}{m}} |_{X_y}) =\mathcal{O}_{X_y}$.
The pair $(L, h_L) := (N + F  +Q +  p^\star A_Y, h)$ satisfies thus the condition in Theorem \ref{extension}.
By applying Theorem \ref{extension}, we know that
$$H^0 (X , m K_{X/Y} +N +F +Q + 2 p^\star A_Y) \rightarrow H^0 (X_y , m K_{X/Y} + N +F +Q + 2 p^\star A_Y)$$
is surjective.
\end{proof}

Together with the arguments in \cite{Zha05}, we have
\begin{corollary}\label{usefulcor}
Let $p: X \rightarrow Y$ be a fibration
between two projective manifolds and let $A_Y$ be a very ample line bundle on $Y$ in Theorem \ref{extension}.
If $-K_{X/Y}$ is nef, then for every $p$-ample pseudo-effective line bundle $L$ on $X$, the restriction map
$$H^0 (X,  L  + 2 p^\star A_Y) \rightarrow H^0 (X_y , L + 2 p^\star A_Y)$$
is surjective for a generic $y \in Y$. 
\end{corollary}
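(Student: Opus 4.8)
The plan is to deduce the statement directly from Proposition \ref{posdir}, using Zhang's decomposition (in the spirit of \eqref{keytrick}) and exploiting the nefness of $-K_{X/Y}$ to produce the semipositivity hypothesis required there. The whole point is to write $L+2p^\star A_Y$ in the form $m K_{X/Y}+N+F+Q+2p^\star A_Y$ occurring in Proposition \ref{posdir}, with the anticanonical term $m K_{X/Y}$ absorbed by setting $N=-mK_{X/Y}$.

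First I would fix a possibly singular metric $h_L$ on $L$ with $i\Theta_{h_L}(L)\geq 0$, which exists since $L$ is pseudo-effective, and choose $m\in\mathbb{N}$ large enough that $\mathcal{J}(h_L^{1/m}|_{X_y})=\mathcal{O}_{X_y}$ for a generic fibre $X_y$; this is legitimate because for $m$ large the Lelong numbers of the weight of $h_L$, and a fortiori those of its restriction to a generic fibre, are smaller than $1$. I then apply Proposition \ref{posdir} with the choices $F:=L$ (equipped with $h_F:=h_L$), $Q:=0$, and $N:=-mK_{X/Y}$. With these choices the target line bundle is
$$m K_{X/Y} + N + F + Q + 2p^\star A_Y = m K_{X/Y} - m K_{X/Y} + L + 2p^\star A_Y = L + 2p^\star A_Y,$$
so that the surjectivity furnished by Proposition \ref{posdir} (which, through Theorem \ref{extension}, holds for a generic $y\in Y$) is exactly the assertion of the corollary.

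The only substantial point is to check the semipositivity hypothesis of Proposition \ref{posdir}, namely that
$$N + \epsilon F + p^\star A_Y = -mK_{X/Y} + \epsilon L + p^\star A_Y$$
admits a smooth metric with semipositive curvature for some $0<\epsilon<1$, and this is where the nefness of $-K_{X/Y}$ is used. Since $L$ is $p$-ample (Definition \ref{listdef} (4)), I can fix an integer $k_0\geq 1$ for which $L + k_0\, p^\star A_Y$ is ample: any witnessing twist $L_Y$ with $L+p^\star L_Y$ ample becomes ample after replacing $L_Y$ by a sufficiently positive multiple of $A_Y$. Choosing $\epsilon$ with $0<\epsilon\leq 1/k_0$ and $\epsilon<1$, I rewrite
$$-mK_{X/Y} + \epsilon L + p^\star A_Y = (-mK_{X/Y}) + \epsilon\bigl(L + k_0\, p^\star A_Y\bigr) + (1-\epsilon k_0)\, p^\star A_Y,$$
which exhibits the bundle as a sum of a nef summand $-mK_{X/Y}$ (nef because $-K_{X/Y}$ is), an ample summand $\epsilon(L+k_0\,p^\star A_Y)$, and a further nef summand $(1-\epsilon k_0)\,p^\star A_Y$ (nef since $p^\star A_Y$ is a pullback of an ample class and $1-\epsilon k_0\geq 0$). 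Hence the sum is ample, in particular semipositive, and all hypotheses of Proposition \ref{posdir} are met.

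I expect the main obstacle to be precisely this semipositivity verification. The difficulty is that only a single, fixed $p^\star A_Y$ is available, and $p^\star A_Y$ degenerates along the fibres of $p$, so one cannot directly upgrade the merely nef bundle $-mK_{X/Y}$ to a semipositive one. The resolution is to borrow a small amount $\epsilon L$ of the $p$-ample bundle and fuse it with part of $p^\star A_Y$ into a genuinely ample summand, whose strict positivity then absorbs the nef deficiency of $-mK_{X/Y}$; the freedom in the parameter $\epsilon$, constrained only by $\epsilon k_0\leq 1$ and $\epsilon<1$, is exactly what makes this work for the prescribed $A_Y$.
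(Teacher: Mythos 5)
Your proposal is correct and follows essentially the same route as the paper: the paper's proof also applies Proposition \ref{posdir} with $N=-mK_{X/Y}$, $F=L$, $Q=\mathcal{O}_X$, after choosing $h_L$ with $i\Theta_{h_L}(L)\geq 0$ and $m$ large enough that $\mathcal{J}(h_L^{1/m}|_{X_y})=\mathcal{O}_{X_y}$. The only difference is cosmetic: the paper simply asserts that $\epsilon L+p^\star A_Y$ is ample for some $0<\epsilon<1$ (hence $-mK_{X/Y}+\epsilon L+p^\star A_Y$ is ample by nefness of $-K_{X/Y}$), whereas you spell out this ampleness verification via the auxiliary integer $k_0$.
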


\begin{proof}
Since $L$ is pseudo-effective, there exists a possibly singular metric $h_L$ such that $i\Theta_{h_L} (L) \geq 0$.
Let $m\in\mathbb{N}$ large enough such that $\mathcal{J} (h_L ^{\frac{1}{m}} |_{X_y}) =\mathcal{O}_{X_y}$ for a generic fibre $X_y$. As $L$ is $p$-ample, $\epsilon L + p^\star A_Y$ is ample for some $0 < \epsilon < 1$.
Combining this with the nefness of $-m K_{X/Y}$, we know that $-m K_{X/Y} + \epsilon L + p^\star A_Y$ is ample.
The corollary is thus proved by using Proposition \ref{posdir}, where we take $N= -m K_{X/Y}$, $F=L$ and $Q=\mathcal{O}_X$.
\end{proof}

\section{Two propositions}\label{twolemme}

Let $p: X \rightarrow Y$ be a fibration between two projective manifolds and let $L$ be an ample line bundle on $X$. 
If $p$ is a trivial fibration\footnote{It means that 
$X \simeq Y \times F$ where $F$ is a 
generic point of $p$.} and $p_\star (L)$ is non zero, we know that $\det p_\star (L)$ is ample and $L- \frac{1}{r} p^\star\det p_\star (L)$ is semi-ample 
where $r$ is the rank of $p_\star (L)$. The goal of this section is to prove some similar results when $p$ is smooth in codimension $1$ and $-K_{X/Y}$ is nef.

\medskip

To begin with, by combining the diagonal method of Viehweg \cite[Thm 6.24]{Vie95} with the method in \cite{Zha05}, 
we can prove the following key proposition of the article.
The proof is very close to \cite[Thm 6.24]{Vie95}\cite[Section 2.6]{Tsu10}\cite[Thm 3.13]{CP17}.

\begin{proposition}\label{psf}
Let $p: X\to Y$ be a fibration between two projective manifolds and we assume that 
$-K_{X/Y}$ is nef. 
We suppose that $p$ is smooth in codimension $1$, namely $p$ is smooth outside a subvariety $Z$ in $X$
of codimension at least $2$.
Let $A$ be a $p$-ample line bundle on $X$ such that $p_\star (A)$ is locally free.
Then $r A - p^\star\det p_\star (A)$ is pseudo-effective, where $r$ is the rank of $p_\star (A)$.
\end{proposition}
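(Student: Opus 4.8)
The plan is to follow the diagonal method of Viehweg combined with Zhang's argument, as the paper itself signals. The key idea behind the Viehweg fibre-product trick is the following: to produce sections that witness the pseudo-effectivity of $rA - p^\star\det p_\star(A)$, I would consider the $r$-fold fibre product $X^r := X\times_Y X\times_Y\cdots\times_Y X$ with its projection $p^r: X^r\to Y$ and the projections $\pi_i: X^r\to X$ onto the $i$-th factor. On $X^r$ one forms the line bundle $A^{(r)} := \bigotimes_{i=1}^r \pi_i^\star A$, whose pushforward satisfies $p^r_\star(A^{(r)}) = \bigotimes^r p_\star(A)$ (base change over the smooth locus, which is all that matters since $Z$ has codimension $\geq 2$ and $p$ is smooth in codimension $1$). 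The point of passing to the fibre product is that $\det p_\star(A)$, which lives inside $\bigotimes^r p_\star(A)$ as the image of the determinant (alternating) map, becomes accessible: there is a natural inclusion $\det p_\star(A)\hookrightarrow \bigotimes^r p_\star(A) = p^r_\star(A^{(r)})$, which by adjunction corresponds to a nonzero section of $A^{(r)} - (p^r)^\star\det p_\star(A)$ on $X^r$, or more precisely exhibits that line bundle as effective after twisting.

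The second ingredient is to upgrade effectivity on $X^r$ to pseudo-effectivity of the desired bundle on $X$ itself. Here I would use Corollary \ref{usefulcor} (equivalently Proposition \ref{posdir} with $N=-mK_{X/Y}$), which is exactly Zhang's method: because $-K_{X^r/Y}$ remains nef on the fibre product (the relative canonical bundle of a fibre product is the sum of the pullbacks of the relative canonical bundles, hence its negative is nef when each $-K_{X/Y}$ is nef), the positivity of the direct image and the surjectivity of restriction to a generic fibre let me transfer global sections back and forth. Concretely, for a very ample $A_Y$ on $Y$ chosen as in Theorem \ref{extension}, the restriction $H^0(X^r, A^{(r)}+2(p^r)^\star A_Y)\to H^0(X^r_y, A^{(r)}+2(p^r)^\star A_Y)$ is surjective, and the fibre $X^r_y = (X_y)^r$ together with the determinant section forces a section whose divisor, after the standard diagonal-method comparison, dominates $rA - p^\star\det p_\star(A)$ up to an ample twist $p^\star A_Y$ that can be scaled away in the limit.

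Assembling these, I would argue as follows. Fix a large $k$ and examine $k(rA-p^\star\det p_\star(A)) + p^\star A_Y$. The determinant inclusion over the generic locus produces nonzero sections of $k\,A^{(r)} - k(p^r)^\star\det p_\star(A) + \text{(ample twist)}$ on the fibre product; restriction surjectivity from Corollary \ref{usefulcor} applied on $X^r$ (using nefness of $-K_{X^r/Y}$ and $p^r$-ampleness of $A^{(r)}$) shows these extend to genuine global sections. Pushing forward and using the multiplicativity $p^r_\star(A^{(r)}) = \bigotimes^r p_\star(A)$ together with the standard fact that a section of a tensor power twisted by $-\det$ pulls back from a section on a single factor via the diagonal, I obtain global sections of $rA - p^\star\det p_\star(A) + \tfrac{1}{k}p^\star A_Y$ for all $k$. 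Letting $k\to\infty$, the $\tfrac1k p^\star A_Y$ correction vanishes in the limit of normalized currents, and pseudo-effectivity of $rA - p^\star\det p_\star(A)$ follows.

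The main obstacle I anticipate is the careful bookkeeping of the fibre-product construction across the bad locus $Z$: one must check that base change $p^r_\star(A^{(r)}) = \bigotimes^r p_\star(A)$ and the determinant inclusion hold as statements about reflexive sheaves on $Y$, using precisely the hypothesis that $p$ is smooth in codimension $1$ (so equalities of torsion-free sheaves that agree in codimension $1$ are genuine isomorphisms). The relative canonical bundle computation $K_{X^r/Y} = \sum_i \pi_i^\star K_{X/Y}$ must also be justified where $p$ is smooth, and then its nefness extended appropriately; this is where the geometry of the fibre product interacts delicately with the positivity hypotheses. The analytic extension step via Corollary \ref{usefulcor} is essentially turnkey once the algebraic setup is correct, so I expect the genuine difficulty to reside in making the diagonal/determinant comparison rigorous rather than in the curvature estimates.
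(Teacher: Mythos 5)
Your overall strategy --- Viehweg's diagonal trick on the fibre product combined with Zhang-style positivity of direct images to transfer effectivity from the fibre product back to the diagonal --- is exactly the skeleton of the paper's argument, and in the special case where $p$ is smooth everywhere your proposal is essentially complete. But for the actual statement, where $p$ is only smooth in codimension $1$, there is a genuine gap at the step where you assert that $-K_{X^r/Y}$ ``remains nef'' and then apply Corollary \ref{usefulcor} on $X^r$. Two things go wrong at once. First, the fibre product $X^r$ is in general a \emph{singular} variety (singular exactly over the locus where $p$ fails to be smooth), while Corollary \ref{usefulcor} and Proposition \ref{posdir} are stated for fibrations between projective \emph{manifolds} and their proofs (Ohsawa--Takegoshi extension, Bergman kernel metrics) require this; so one must pass to a desingularisation $X^{(r)}$ of $X^r$. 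Second, once one does, the identity $K_{X^{(r)}/Y}=\sum_i \pr_i^\star K_{X/Y}$ is destroyed: one only has
\[
-K_{X^{(r)}/Y}=\sum_{i=1}^r \pr_i^\star(-K_{X/Y})+E_1-E_2
\]
for effective divisors $E_1,E_2$ supported on the set $E$ lying over the bad locus, and $-K_{X^{(r)}/Y}$ is \emph{not} nef in general. You flag this as a delicate point to be handled by reflexive-sheaf bookkeeping on $Y$, but that cannot repair it: the failure is a divisorial correction on the total space, and it enters the curvature and multiplier-ideal hypotheses of the extension theorem, not merely sheaf identities in codimension $1$.

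The paper's fix, which is the real content beyond the sketch you reproduce, is to use the more flexible Proposition \ref{posdir} instead of Corollary \ref{usefulcor}: take $N=qm\sum_i\pr_i^\star(-K_{X/Y})$, which is controlled because it is a sum of pullbacks of nef bundles (no identification with $-K_{X^{(r)}/Y}$ is needed), absorb the error divisor into the term $Q=qmE_1$ allowed by Proposition \ref{posdir} (its support does not meet the generic fibre), and accept a surjectivity statement for $qL+qmE_2+2(\p)^\star A_Y$, i.e.\ with an extra effective twist $E_q=qmE_2$ supported on $E$. This costs nothing in the end because the generic fibre $X^{(r)}_y$ misses $E$ (so the twisted and untwisted section spaces on $X^{(r)}_y$ agree) and the diagonal embedding $j: X\setminus Z\hookrightarrow X^{(r)}$ also misses $E$ (so pulling back kills $E_q$); one obtains sections of $qrA-q\,p^\star\det p_\star(A)+2p^\star A_Y$ only on $X\setminus Z$, and then extends them across $Z$ using $\codim_X Z\geq 2$. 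For the same reason, the determinant morphism only produces a section of $L+E'$ on $X^{(r)}$ for some divisor $E'$ supported on $E$ (via \cite[Lemma 3.15]{Hor10}, using flatness of $p$ over $p(X\setminus Z)$ and local freeness of $p_\star(A)$), not a section of $L$ itself. None of these corrections appear in your write-up, and without them the key surjectivity step you invoke is applied to a fibration and line bundle for which its hypotheses fail.
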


\begin{proof}[Sketch of the proof]
To explain the idea of the proof, 
we first sketch the proof under the assumption that $p$ is smooth. 

We consider the natural morphism
$$s: \det p_\star (A) \rightarrow \bigotimes^r p_\star (A) .$$
Let $X^{r} = X\times_Y X \times_Y \cdots \times_Y X$ be the $r$-times fiberwise product of the fibration $p: X\rightarrow Y$.
Let $\pr_i: X^{r} \rightarrow X$ be the $i$-th directional projection and let $\p : X^r \rightarrow Y$ be the natural induced fibration.
Set $A_r := \bigotimes\limits_{i=1}^r \pr_i ^\star A$ and $L:= A_r - (\p)^\star\det p_\star (A)$. 
As $(\p)_\star (A_r)= \bigotimes\limits^r p_\star (A) $, the morphism $s$ induces a non-trivial section 
\begin{equation}\label{section}
 \tau \in H^0 (X^{r} , L) .
\end{equation}

\smallskip

The idea of Viehweg is as follows. 
Let $j: X \rightarrow X^r$ be the diagonal embedding.
We know that 
\begin{equation}\label{diagg}
L |_{j (X)} = r A - p^\star\det p_\star (A) . 
\end{equation}
Note that \eqref{section} implies that $L$ is effective on $X^r$. 
If the effectiveness of $L$ on $X^r$ implies the pseudo-effectiveness of $L |_{j (X)}$, thanks to \eqref{diagg}, the proposition is proved.
In general, it is not true. However,
by using Corollary \ref{usefulcor}, we can prove it in our case.

\smallskip

To be more precise, let $A_Y$ be the ample line bundle on $Y$ in Theorem \ref{extension}.
For every $q \in \mathbb{N}^\star$, $q L$ is effective and $\p$-ample.
Since $-K_{X^r /Y} = \sum_i \pr_i ^\star (-K_{X/Y})$ is nef, we can apply Corollary \ref{usefulcor} to $(\p: X^r \rightarrow Y,  q L)$.
Then the restriction 
\begin{equation}\label{sur0}
 H^0 (X^{r}, q L +2 (\p)^\star A_Y) \rightarrow H^0 (X^r _y,  q L +2 (\p)^\star A_Y) 
\end{equation}
is surjective for a generic fibre $X^r _y$. 

Let $j: X\rightarrow X^r$ be the diagonal embedding. By restricting \eqref{sur0} to $j (X)$, thanks to \eqref{diagg}, we have 
$$
\xymatrix{
H^0 \big(X^r , q L +2 (\p)^\star A_Y)  \ar[d]_{j^\star} \ar@{->>}[r] & H^0 \big(X^r _y , q L +2 (\p)^\star A_Y) \ar[d]_{j^\star} \\
H^0 (X, qr A - q \cdot p^\star \det p_\star (A) +2 p^\star A_Y ) \ar[r]
& H^0 (X_y ,qr A )}
$$
Since the image of $H^0 \big(X^r _y , q L +2 (\p)^\star A_Y) =H^0 (X^r _y, q A_r)  \rightarrow H^0 (X_y ,qr A )$ is non trivial, the surjectivity \eqref{sur0}
and the above commutative diagram implies that the image of
$$H^0 (X, qr A - q \cdot p^\star \det p_\star (A) +2 p^\star A_Y) \rightarrow H^0 (X_y, qr A) $$
is non zero.  In particular, $qr A - q \cdot p^\star \det p_\star (A) +2 p^\star A_Y$ is effective on $X$.
Then $rA - p^\star \det p_\star (A) +\frac{2}{q} \cdot  p^\star A_Y$ is $\mathbb{Q}$-effective for every $q >0$.
By letting $q\rightarrow +\infty$, the proposition is proved.
\end{proof}

Now we give the complete proof of the proposition, which follows closely \cite[Thm 3.13]{CP17}.

\begin{proof}[Proof of the proposition \ref{psf}]
Let $Y_1$ be the flat locus of $p$. As $p$ is smooth in codimension $1$, $p^{-1} (Y \setminus Y_1)$ is of codimension at least two.
After replacing $Z$ by $Z \cup p^{-1} (Y \setminus Y_1)$, we can assume that 
$p$ is flat over $p (X\setminus Z)$ and $Z$ is of still codimension at least $2$.
Let $X^{r} = X\times_Y X \times \cdots \times_Y X$ be the $r$-times fiberwise product of $p: X\rightarrow Y$, and let $X^{(r)}$ be a desingularisation of $X^r$. 
Let 
$$\pr_i: X^{(r)} \rightarrow X$$ 
be the $i$-th directional projection, and $\p : X^{(r)} \rightarrow Y$ be the natural induced morphism.
Set $E:= X^{(r)} \setminus (\bigcap\limits_i \pr_i ^{-1} (X\setminus Z))$.
Then the diagonal embedding $j: X\setminus Z  \hookrightarrow  X^{(r)}$ satisfies $j (X\setminus Z) \subset X^{(r)}\setminus E$. 
Note that for a generic point $y\in Y$, as $p$ is smooth over $y$, 
we have 
\begin{equation}\label{smoofib}
X^{(r)} _y \cap E =\emptyset , 
\end{equation}
where $X^{(r)} _y$ is the fibre over $y$.

\smallskip

We consider the natural morphism
$$s: \det p_\star (A) \rightarrow \bigotimes^r p_\star (A) .$$
Set $A_r := \bigotimes\limits_{i=1}^r \pr_i ^\star A$ and $L:= A_r - (\p)^\star\det p_\star (A)$. 
Since $p$ is flat over $p (X\setminus Z)$ and $p_\star (A)$ is locally free, 
thanks to \cite[Lemma 3.15]{Hor10}, $s$ induces a non-trivial section $\tau \in H^0 (X^{(r)} , L +E')$ 
for some divisor $E'$ supported in $E$.
Let $A_Y$ be the ample line bundle on $Y$ in Theorem \ref{extension}. 
\medskip

For every $q \geq 1$, we first prove that there exists a divisor $E_q$ supported in $E$ such that the restriction
\begin{equation}\label{sur}
 H^0 (X^{(r)}, q L + E_q + 2 (\p)^\star A_Y) \rightarrow H^0 (X^{(r)} _y,  q L +2 (\p)^\star A_Y) 
\end{equation}
is surjective for a generic $y \in Y$. 

\medskip

In fact, there exist some effective divisors $E_1$ and $E_2$ supported in $E$ such that
\begin{equation}\label{add2}
- K_{X^{(r)}/Y} = \sum_{i=1}^r \pr_i ^\star (-K_{X/Y}) + E_1 -E_2 . 
\end{equation}
Then for every $m \in\mathbb{N}$, we have
\begin{equation}\label{add1}
q L + qm E_2 + 2 (\p)^\star A_Y = qm K_{X^{(r)} /Y} + qm (\sum_{i=1}^r \pr_i ^\star (-K_{X/Y})) +q L + qm E_1 +2 (\p)^\star A_Y . 
\end{equation}
As $-K_{X/Y}$ is nef, for $\epsilon \ll 1$, the line bundle
$$qm (\sum_i \pr_i ^\star (-K_{X/Y})) + \epsilon L + (\p)^\star A_Y = \sum_i \pr_i ^\star( - qm K_{X/Y} +\epsilon A) +  (\p)^\star (A_Y - \ep\det p_\star (A))$$
is semi-ample on $X^{(r)}$. 
 
We can thus apply Proposition \ref{posdir} to the fibration $\p : X^{(r)}\rightarrow Y$ by taking $N =qm (\sum_{i=1}^r \pr_i ^\star (-K_{X/Y}))$,
$F= q L$ and $Q=qm E_1$, where $m \gg 1$ is large enough with respect to $q L$. Together with \eqref{add1}, the restriction
\begin{equation}\label{add3}
H^0 (X^{(r)}, q L + qm E_2 + 2 (\p)^\star A_Y) \rightarrow H^0 (X^{(r)} _y, q L + qm E_2 + 2 (\p)^\star A_Y ) 
\end{equation}
is thus surjective, where $y\in Y$ is a generic point. 
Thanks to \eqref{smoofib}, we have  
$$H^0 (X^{(r)} _y, q L + qm E_2 + 2 (\p)^\star A_Y ) = H^0 (X^{(r)} _y, q L + 2 (\p)^\star A_Y ) .$$
Combining this with \eqref{add3}, \eqref{sur} is proved by taking $E_q = qm E_2$. 
\medskip

Finally, we take the pull-back $j^\star$ of \eqref{sur}, where $j: X\setminus Z \rightarrow X^{(r)}$ is the diagonal embedding.
As $j( X\setminus Z) \subset X^{(r)}\setminus E$ and $E_q$ is supported in $E$, we obtain the following commutative diagram
$$
\xymatrix{
H^0 \big(X^{(r)} , q L  + 2 (\p)^\star A_Y + E_q )  \ar[d]_{j^\star} \ar@{->>}[r] & H^0 \big(X^{(r)} _y , q L + 2 (\p)^\star A_Y) \ar[d]_{j^\star}\\
H^0 (X\setminus Z, qr A - q \cdot p^\star \det p_\star (A) +2 p^\star A_Y ) \ar[r]
& H^0 (X_y ,qr A )} .
$$
Note that the image of $ H^0 \big(X^{(r)} _y , q L + 2 (\p)^\star A_Y) \rightarrow H^0 (X_y ,qr A )$ is non zero. Together with the surjectivity \eqref{sur}
and the above commutative diagram, we know that the image of 
$$H^0 (X\setminus Z, qr A - q \cdot p^\star \det p_\star (A) +2 p^\star A_Y) \rightarrow H^0 (X_y, qr A)$$
is non zero. As $\codim_X Z \geq 2$, $qr A - q \cdot p^\star \det p_\star (A) +2 \cdot p^\star A_Y$ is effective. 
Then $rA - p^\star \det p_\star (A) +\frac{2}{q} \cdot p^\star A_Y$ is $\mathbb{Q}$-effective.
The proposition is proved by letting $q\rightarrow +\infty$.
\end{proof}

Using \cite{Zha05,PT14}, we can prove

\begin{proposition}\label{addedlemma}
Let $p: X \rightarrow Y$ be a fibration between two projective manifolds and we suppose that $-K_{X/Y}$ is nef. 
Let $L$ be a pseudo-effective and $p$-ample line bundle on $X$. 
If $ p_\star (L)$ is not zero, then $\det p_\star (L)$ is pseudo-effective.
\end{proposition}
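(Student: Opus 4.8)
The plan is to deduce the statement from the lower-bound version of the positivity of direct images, Proposition \ref{lowercontr}, by exhibiting $L$ in the form $m K_{X/Y} + \widetilde L$. Set $m\in\mathbb{N}$ and $\widetilde L := L - m K_{X/Y} = L + m(-K_{X/Y})$. Since $L$ is pseudo-effective and $-K_{X/Y}$ is nef (hence pseudo-effective on the projective manifold $X$), the class of $\widetilde L$ is pseudo-effective; moreover, as $L$ is $p$-ample and $-K_{X/Y}$ is nef, $\widetilde L|_{X_y}$ is ample on a general fibre. Because $m K_{X/Y} + \widetilde L = L$ and $p_\star(L)\neq 0$, Proposition \ref{lowercontr} will apply to $\widetilde L$ as soon as we produce, for a suitable fixed $m\gg 1$, a metric $h$ on $\widetilde L$ with $i\Theta_h(\widetilde L)\geq p^\star\alpha$ for some smooth $d$-closed $(1,1)$-form $\alpha$ on $Y$ and with $\mathcal{J}(h^{\frac1m}|_{X_y})=\mathcal{O}_{X_y}$; it will then give a metric $\widehat h$ on $\det p_\star(L)$ with $i\Theta_{\widehat h}(\det p_\star(L))\geq r\,\alpha$, where $r=\rank p_\star(L)$.

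The heart of the argument is the construction of such a metric for $\alpha$ of arbitrarily small size, which is where the nefness of $-K_{X/Y}$ and the method of \cite{Zha05} enter. I would use three ingredients: a singular metric $h_L^{0}$ on $L$ with $i\Theta_{h_L^{0}}(L)\geq 0$ (from pseudo-effectivity), whose Lelong numbers are bounded by a constant depending only on $c_1(L)$; a smooth metric $h_L^{1}$ on $L$ with $i\Theta_{h_L^{1}}(L)\geq \omega-p^\star\beta$ coming from the $p$-ampleness (writing $L=(L+p^\star L_Y)-p^\star L_Y$ with $L+p^\star L_Y$ ample, $\omega$ a Kähler form and $\beta$ a fixed smooth representative of $c_1(L_Y)$); and, for every $\epsilon>0$, a smooth metric $g_\epsilon$ on $-K_{X/Y}$ with $i\Theta_{g_\epsilon}(-K_{X/Y})\geq -\epsilon\omega$, which exists since $-K_{X/Y}$ is nef (Definition \ref{listdef}(1)). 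For $t\in(0,1)$ I would then take on $\widetilde L$ the metric $h_t:=(h_L^{0})^{1-t}\otimes (h_L^{1})^{t}\otimes g_\epsilon^{\otimes m}$, whose curvature satisfies $i\Theta_{h_t}(\widetilde L)\geq (t-m\epsilon)\omega - t\,p^\star\beta$. Choosing $\epsilon\leq t/m$ makes the $\omega$-coefficient non-negative, so $i\Theta_{h_t}(\widetilde L)\geq p^\star(-t\beta)$, i.e. $\alpha=-t\beta$ is a pull-back from $Y$.

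Two points then remain. First, the multiplier ideal: since the smooth factors do not contribute, the weight of $h_t^{\frac1m}$ has the same singularities as $\frac{1-t}{m}$ times the weight of $h_L^{0}$; as the Lelong numbers of $h_L^{0}$ are bounded independently of $t$, one has $\mathcal{J}(h_t^{\frac1m})=\mathcal{O}_X$ for a single $m\gg 1$, hence $\mathcal{J}(h_t^{\frac1m}|_{X_y})=\mathcal{O}_{X_y}$ on a general fibre by the restriction theorem for multiplier ideals. Applying Proposition \ref{lowercontr} produces, for every small $t>0$, a metric on $\det p_\star(L)$ with $i\Theta\geq -r\,t\,\beta\geq -\delta(t)\,\omega_Y$, where $\omega_Y$ is a fixed Kähler form on $Y$ dominating $\beta$ and $\delta(t)\to 0$ as $t\to 0$. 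By the nefness criterion of Definition \ref{listdef}(1) this shows that $\det p_\star(L)$ is nef, in particular pseudo-effective.

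The main obstacle is exactly the passage from the nef (rather than strictly positive) bundle $-K_{X/Y}$ to a genuine curvature lower bound on the base: the approximate metrics $g_\epsilon$ only give $i\Theta\geq -\epsilon\omega$, a negativity spread in all directions and not a pull-back from $Y$, so Proposition \ref{lowercontr} does not apply to them directly. The device that resolves this is the convex combination $(h_L^{0})^{1-t}(h_L^{1})^{t}$: the smooth summand $h_L^{1}$ supplies the fibre-direction positivity $t\omega$ needed to absorb $m\epsilon\omega$, while the singular summand $h_L^{0}$ keeps the base-direction deficit equal to the pull-back $t\,p^\star\beta$, which tends to $0$ with $t$, and simultaneously keeps the Lelong numbers, hence the multiplier ideal, under control. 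One must check that $m$ can be chosen large once and for all, independently of $t$ and $\epsilon$; this is guaranteed by the uniform bound on the Lelong numbers of $h_L^{0}$.
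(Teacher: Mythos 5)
Your proof is correct and follows essentially the same route as the paper: both decompose $L = mK_{X/Y} + \widetilde L$, equip $\widetilde L$ with a combination of a smooth positively curved piece (coming from the $p$-ampleness of $L$ together with the nefness of $-K_{X/Y}$) and a fraction of the pseudo-effective metric on $L$, so that the curvature defect is a pull-back from $Y$ while the multiplier ideal $\mathcal{J}(h^{1/m})$ stays trivial for a single fixed $m$, and then apply Proposition \ref{lowercontr} and let the parameter tend to $0$. The only cosmetic difference is where the nef negativity is absorbed: the paper notes that $-mK_{X/Y} + \epsilon(L + p^\star A_Y)$ is ample and so carries a smooth metric on $-mK_{X/Y}+\epsilon L$ whose curvature defect is exactly $-\epsilon p^\star \omega_Y$, whereas you keep $-K_{X/Y}$ separate with the metrics $g_\epsilon$ and absorb the loss $-m\epsilon\omega$ into $t\omega$ via the condition $\epsilon \le t/m$.

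One correction to your final step: the metric $\widehat h$ furnished by Proposition \ref{lowercontr} is possibly \emph{singular} (the inequality $i\Theta_{\widehat h} \geq r\alpha$ holds only in the sense of currents), so Definition \ref{listdef}(1), which demands \emph{smooth} metrics, cannot be invoked to conclude that $\det p_\star (L)$ is nef. What your construction yields is, for every small $t > 0$, a closed current in $c_1(\det p_\star(L))$ bounded below by $-\delta(t)\,\omega_Y$; taking a weak limit as $t \to 0$ gives a closed positive current in this class, which is precisely the pseudo-effectivity claimed in the proposition (and is also all the paper concludes when it lets its parameter $\epsilon$ tend to $0$).
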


\begin{proof}
Let $h_L$ be a possibly singular metric on $L$
such that $i\Theta_{h_L} (L) \geq 0$ on $X$ and let $m\in \mathbb{N}$ large enough such that $\mathcal{J} (h_L ^{\frac{1}{m}}) = \mathcal{O}_X$.
Since $L$ is $p$-ample, there exists an ample line bundle $A_Y$ on $Y$ 
such that $L + p^\star A_Y$ is ample on $X$.
Then for every $\ep >0$, $- mK_{X/Y} + \ep (L +  p^\star A_Y)$ is ample.
Therefore we can find a smooth metric $h_\ep$ on $- mK_{X/Y} + \ep L$
such that
$$i\Theta_{h_\ep} (- mK_{X/Y} + \ep L) \geq -\ep p^\star \omega_Y ,$$
where $\omega_Y$ is a $(1,1)$-form in the class of $c_1 (A_Y)$.

Then $\widetilde{h}_\ep := h_\ep + (1-\ep) h_L$ defines a metric on $- mK_{X/Y} + L$
such that 
$$i\Theta_{\widetilde{h}_\ep} (- mK_{X/Y} +  L) \geq -\ep p^\star \omega_Y \qquad\text{and}\qquad \mathcal{J} (\widetilde{h}_\ep ^\frac{1}{m}) =\mathcal{O}_X .$$
By applying Proposition \ref{lowercontr}, $\widetilde{h}_\ep$ induces a metric $\widehat{h}_\ep$ on 
$$\det p_\star (m K_{X/Y} + (- mK_{X/Y} +  L)) $$ 
such that 
$$i\Theta_{\widehat{h}_\ep} (\det p_\star (m K_{X/Y} + (- mK_{X/Y} +  L)) )\geq -\ep \rank p_\star (L) \cdot p^\star\omega_Y .$$
The proposition is proved by letting $\ep\rightarrow 0$.
\end{proof}

\section{Proof of the main theorem}\label{mainsect}

We begin to prove the main theorem :

\begin{theorem}\label{mainproof}
Let $X$ be a projective manifold with nef anti-canonical bundle and let $p: X\to Y$ be the Albanese map. 
Then $p$ is locally trivial.
Moreover, let $\mathbb{C}^r \rightarrow Y$ be the universal cover and set $\widetilde{X} := X\times_Y \mathbb{C}^r$. Then
$\widetilde{X}$ admits the following splitting 
$$\widetilde{X} \simeq \mathbb{C}^r \times F ,$$
where $F$ is the generic fibre of $p$.
\end{theorem}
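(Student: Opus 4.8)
The plan is to combine the two propositions from Section \ref{twolemme} with the positivity-of-direct-images machinery and the numerical-flatness criterion of Proposition \ref{isotri}. First I would reduce to a tractable geometric situation: since $Y$ is an abelian variety (the Albanese of $X$) and $-K_X$ is nef, the results of Zhang and P\u{a}un recalled in the introduction give that $p$ is a fibration, and by \cite{LTZZ10} $p$ is equidimensional with reduced fibres; in particular $p$ is smooth in codimension $1$, so Proposition \ref{psf} applies. I would also replace an arbitrary $p$-ample $L$ by one with $p_\star(mL)$ locally free for all $m$ (again via \cite{LTZZ10}), so that the determinant bundles below make sense.

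The heart of the argument is to produce a $p$-ample line bundle whose direct images are \emph{numerically flat}, so that Proposition \ref{isotri} forces local triviality. Fix a $p$-very ample line bundle $A$ with $p_\star(A)$ locally free of rank $r$. By Proposition \ref{psf}, $rA - p^\star\det p_\star(A)$ is pseudo-effective; setting $L' := rA - p^\star\det p_\star(A)$, this $L'$ is both pseudo-effective and $p$-ample (adding $p^\star(\det p_\star(A))$ back recovers an ample-up-to-pullback bundle). The key point is then to show $\det p_\star(mL')$ has trivial first Chern class for every $m$. On one hand, applying Proposition \ref{addedlemma} to $mL'$ shows $\det p_\star(mL')$ is pseudo-effective. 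On the other hand, one computes $\det p_\star(mL')$ in terms of $\det p_\star(mA)$ and $p^\star\det p_\star(A)$ using the projection formula and the fact that tensoring by a pullback twists the determinant by a power of $p^\star\det p_\star(A)$; combined with Proposition \ref{psf} applied to the various $mA$, this forces the opposite pseudo-effectivity, i.e. $-\det p_\star(mL')$ is also pseudo-effective. Since $Y$ is an abelian variety, a pseudo-effective class whose negative is also pseudo-effective is numerically trivial, giving $c_1(\det p_\star(mL')) = 0$.

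Next I would upgrade ``$\det$ is numerically trivial'' to ``$p_\star(mL')$ is numerically flat.'' Here I would follow the isogeny argument of \cite[Lemma 3.21]{DPS94}: after pulling back along a suitable isogeny of the abelian variety $Y$ (which does not affect local triviality of $p$), one arranges $\tfrac{1}{r}\det p_\star(A)$ to be an honest line bundle, twists $p_\star(mL')$ by its inverse to kill the determinant, and then invokes the positivity of direct images (Theorem \ref{extension} / Proposition \ref{posdir} via \cite{BP10}) to see that the twisted sheaf is nef. A vector bundle with $c_1(\det) = 0$ that is nef is numerically flat by Definition \ref{listdef}(3); applying this for every $m$ gives that $p_\star(mL')$ is numerically flat for all $m$. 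Finally I would feed this into Proposition \ref{isotri} (with $L'$ in place of the $p$-very ample $L$, after replacing $L'$ by a sufficiently positive multiple to ensure $p$-very ampleness) to conclude that $p$ is locally trivial and that $\widetilde{X} \simeq \mathbb{C}^r \times F$.

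The main obstacle I anticipate is establishing the \emph{two-sided} pseudo-effectivity that yields $c_1(\det p_\star(mL')) = 0$: one inclusion comes readily from Proposition \ref{addedlemma}, but the reverse bound requires carefully bookkeeping the diagonal/fibre-product construction of Proposition \ref{psf} against the determinant of the higher direct images $p_\star(mL')$, and reconciling the normalizations coming from the rank $r$ and the $m$-th power. A secondary technical point is ensuring that the isogeny reduction is harmless — that numerical flatness and local triviality descend along the finite \'etale cover of $Y$ — which I expect to handle by the standard argument that local triviality is checked on an \'etale cover and that $F$ is unchanged.
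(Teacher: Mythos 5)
You follow the paper's overall architecture (Proposition \ref{psf} to produce a pseudo-effective twist, a two-sided pseudo-effectivity argument to make $c_1$ of the direct images vanish, an isogeny argument for nefness, and Proposition \ref{isotri} to conclude), but the step you yourself flag as the main obstacle is a genuine gap, and the mechanism you propose cannot close it. There are two intertwined problems. First, your $L':=rA-p^\star\det p_\star(A)$ does not satisfy $c_1(p_\star(L'))=0$: by the projection formula, $\det p_\star(L')=\det p_\star(rA)-\rank p_\star(rA)\cdot\det p_\star(A)$, a comparison between the direct images at levels $r$ and $1$ which is itself an instance of the statement you are trying to prove. The paper instead passes to an isogeny of $Y$ \emph{at the outset}, so that $\frac{1}{r}\det p_\star(A)$ becomes a line bundle, and works with $L:=A-\frac{1}{r}p^\star\det p_\star(A)$, for which $\det p_\star(L)=0$ holds tautologically (this is \eqref{numtrivial}); that vanishing is the anchor of the whole argument, and your $L'$ has no anchor. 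Second, your route to the reverse inequality --- Proposition \ref{psf} applied to the various $mA$ plus the projection formula --- can only ever produce inequalities in the \emph{same} direction as Proposition \ref{addedlemma}. Indeed, Proposition \ref{psf} applied to $mA$ gives a pseudo-effective line bundle on $X$, namely $D_m:=r_m\cdot mA-p^\star\det p_\star(mA)$ with $r_m=\rank p_\star(mA)$; to convert this into an inequality between classes on $Y$ one must push forward, i.e. apply Proposition \ref{addedlemma} once more, and applying it to $D_m$ itself only shows that $\det p_\star(r_m mA)-\rank p_\star(r_m mA)\cdot\det p_\star(mA)$ is pseudo-effective --- the ``increasing'' comparison between levels $r_m m$ and $m$, not the upper bound you need.

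The missing idea, which is exactly the paper's Lemma \ref{addedle}, is to rescale $D_m$ down to level $1$ before pushing forward. Since the assignment $B\mapsto\rank p_\star(B)\cdot B-p^\star\det p_\star(B)$ is unchanged when $B$ is twisted by a pullback from $Y$, one has $\frac{1}{m r_m}D_m=L-\frac{1}{m r_m}p^\star c_1(p_\star(mL))$; this is $\mathbb{Q}$-pseudo-effective by Proposition \ref{psf} and is $p$-ample, so (after a further isogeny making the twist integral) Proposition \ref{addedlemma} applies to it and shows that $\det p_\star(L)-\frac{\rank p_\star(L)}{m r_m}\,c_1(p_\star(mL))$ is pseudo-effective. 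Only now does the anchor $\det p_\star(L)=0$ force $-c_1(p_\star(mL))$ to be pseudo-effective, which, combined with your (correct) application of Proposition \ref{addedlemma} to $mL$, gives $c_1(p_\star(mL))=0$. With your un-normalized $L'$ the same computation only shows that $c_1(p_\star(L'))-\frac{\rank p_\star(L')}{m r'_m}\,c_1(p_\star(mL'))$ is pseudo-effective (where $r'_m=\rank p_\star(mL')$), so all levels are merely sandwiched between $0$ and a multiple of $c_1(p_\star(L'))$, which is not known to vanish; the argument does not close. For the same reason, your later prescription to ``twist $p_\star(mL')$ by $-\frac{1}{r}\det p_\star(A)$ to kill the determinant'' is incorrect as stated: that twist shifts the determinant by $-\frac{r'_m}{r}\det p_\star(A)$, which has no reason to cancel $\det p_\star(mL')$; the normalization must be built into the line bundle on $X$ from the start. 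A smaller point: nefness of the direct images does not follow from Theorem \ref{extension} or Proposition \ref{posdir} alone; it requires the paper's Steps 1--3 in full (generic generation after isogeny via Corollary \ref{usefulcor}, the torsion-point argument of \cite{MFK} to get surjectivity at \emph{every} point --- which again consumes $c_1=0$, since otherwise the class of the divisor of the section $s$ would grow with the isogeny --- and the averaging of metrics yielding curvature bounded below by $-\frac{3}{n}p^\star\omega_Y$). Once these repairs are made, your plan, including your variant of running the isogeny argument at every level $m$ instead of using the $\Sym^m$-quotient sequence \eqref{exactseq}, does reproduce the paper's proof.
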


\begin{proof}

First of all, thanks to \cite{LTZZ10}, $p$ is flat.
We can thus find a very ample line bundle $A$ on $X$ such that $p_\star (m A)$ is locally free for every $m\in\mathbb{N}$
and the natural morphism $\Sym^m p_\star (A) \rightarrow p_\star (m A)$ is surjective for every $m\in\mathbb{N}$.
Let $r$ be the rank of $p_\star (A)$.
After passing to some isogeny of $Y$, we can assume that $\frac{1}{r} \det p_\star (A)$ is a line bundle. 
Set
\begin{equation}\label{def}
L := A - \frac{1}{r} p^\star \det p_\star (A) . 
\end{equation}
Thanks to \cite{LTZZ10}, $p$ is smooth in codimension $1$.
We can thus apply Proposition \ref{psf} to $(p: X\rightarrow Y, A)$. Therefore $L$ is pseudo-effective, and by construction we have
\begin{equation}\label{numtrivial}
c_1 (p_\star (L))=0 .
\end{equation}

\medskip

The plan of the rest of the proof is as follows. 
From Step 1 to Step 3,  by combining \cite[3.D]{DPS94} with the results about the positivity of direct images, 
we will prove that $p_\star (L) $ is numerically flat on $Y$. In Step 4, we will prove the theorem.
We remark that, if $-K_X$ is hermitian positive, we can easily prove that $p_\star (L)$ is hermitian flat by using \cite[Thm 3.3.5]{PT14}, \cite[Thm 5.2]{CP17}
and the arguments in \eqref{keytrick}.
However, as $-K_X$ is only nef in our case, we can not use directly \cite{PT14}. 
We use here the isogeny argument \cite[Lemma 3.21]{DPS94} to prove the nefness of $p_\star (L)$.

\medskip

{\em Step 1: Construction}

Let $A_Y$ be a sufficiently ample line bundle on $Y$ such that $A_Y - \frac{1}{r}  \det p_\star (A)$ is ample, 
and $A_Y$ satisfies the condition in Theorem \ref{extension}. We can ask also that, for every $F\in \Pic^0 (Y)$, $A_Y +F$ is very ample on $Y$.

Since $Y$ is a torus, for every $n\in\mathbb{N}$ sufficiently divisible, 
we can take a $n$ to $1$ isogeny $\pi_{Y,n}:  Y \rightarrow Y$. Let $X_n := X\times_{\pi_{Y,n}} Y$. Then $X_n$ is smooth with nef anticanonical bundle,
and we have 
\begin{equation}\label{isoge}
\pi_{Y,n} ^\star  c_1 (A_Y )= n \cdot c_1 (A_Y) \in H^{1,1} (Y, \mathbb{R}) . 
\end{equation}
$$
\xymatrix{
X_n \ar[d]_{p_n} \ar[r]^{\pi_n} & X \ar[d]^p\\
Y \ar[r]^{\pi_{Y,n}} & Y}
$$
Set $V_n :=  \pi_{Y,n} ^\star p_\star (L)$. As $L$ is proved to be pseudo-effective, by applying Corollary \ref{usefulcor} to $( p_n: X_n \rightarrow Y, \pi_n ^\star L)$, 
the restriction
\begin{equation}\label{genericsur}
H^0 (Y, 2 A_Y \otimes V_n) \rightarrow (2 A_Y \otimes V_n )_y  
\end{equation}
is surjective for a generic $y\in Y$.

\medskip

{\em Step 2: Global surjectivity}

We prove in this step that for $n$ sufficiently large and divisible, the restriction 
\begin{equation}\label{surjeverywhere}
H^0 (Y, 3 A_Y \otimes V_n ) \rightarrow (3 A_Y \otimes V_n)_{y_0}  
\end{equation}
is surjective for every $y_0 \in Y$.

\medskip

In fact, thanks to the generic surjectivity \eqref{genericsur}, we can find 
$$\{s_1, \cdots ,s_r \} \subset H^0 (Y, 2 A_Y \otimes V_n)$$ 
such that $s:= s_1 \wedge \cdots \wedge s_r \in H^0 (Y, 2r A_Y \otimes \Det V_n)$
is non zero. By \eqref{numtrivial}, we know that
$$c_1 (\Det V_n)= \pi_{Y,n} ^\star c_1 (\Det p_\star (L))=0 .$$
Therefore the numerical class
\begin{equation}\label{indepent}
c_1 (\Div s) = c_1 (2r A_Y) \in H^{1,1} (Y, \mathbb{R}) 
\end{equation}
is independent of $n$. 

\medskip

On the other hand, after a translation, we can suppose without lose of generality that $\pi_{Y, n} (y_0)$ is the origin in $Y$.
Then $\{\pi_{Y, n} ^{-1} (\pi_{Y, n} (y_0))\}$ is the set of the $n$-torsion points in $Y$. 
Thanks to \eqref{indepent}, the numerical class $c_1 (\Div (s))$ is independent of $n$. As a consequence, 
\cite[Prop 7.7]{MFK} implies that for $n$ large enough, 
the divisor $\Div (s)$ could not contain the set $\{\pi_{Y, n} ^{-1} (\pi_{Y, n} (y_0))\}$
\footnote{The proof in \cite[Prop 7.7]{MFK} is an effective estimate.  We can also give non effective estimate proof as follows. 
In fact, if it is not true, then for every $n$ sufficiently divisible, 
we can find an ample line bundle in the same class of $2 r c_1 (A_Y)$, such that the set 
$\{\pi_{Y, n} ^{-1} (\pi_{Y, n} (y_0))\}$ is contained in the zero locus $Z_n$ of a
section of this line bundle. As the volumes of $[Z_n]$ is independent of $n$, by the compactness of cycle spaces, after passing to a subsequence,
$[Z_n]$ will tends to a divisor $[Z]$ in $Y$. However, the torsion sets $\{\pi_{Y, n} ^{-1} (\pi_{Y, n} (y_0))\}$ will not converge to a strict subvariety of $X$ when $n \rightarrow +\infty$.
We get thus a contradiction.}.
Therefore there exists a point $y_1 \in \pi_{Y, n} ^{-1} (\pi_{Y, n} (y_0)) $ such that
\begin{equation}\label{nonvan}
s (y_1) \neq 0 .
\end{equation}
\smallskip

Finally, let $G_n$ be the Galois group associated to the \'{e}tale cover $\pi_{Y, n}: Y \rightarrow Y$ and 
let $g\in G_n$ such that $g(y_0)=y_1$. As $V_n$ is $G_n$-invariant, $g$ induces the isomorphisms
$$H^0 (Y,  2 A_Y \otimes V_n) \rightarrow H^0 (Y,  2 g^\star (A_Y) \otimes V_n) $$ 
and 
$$H^0 (Y,  2 r A_Y \otimes \det V_n) \rightarrow H^0 (Y,  2r g^\star (A_Y) \otimes \det V_n) .$$
Therefore
$$ g^\star (s_1) (y_0)\wedge  \cdots \wedge g^\star (s_r) (y_0) =g^\star (s) (y_0) =s (y_1) \neq 0 .$$
As a consequence, $\{g^\star (s_1) (y_0), \cdots , g^\star (s_r) (y_0)\}$ generates $(2 g^\star (A_Y) \otimes V_n)_{y_0} $. 
Note that $A_Y - g^\star A_Y \in \Pic^0 (Y)$. The construction of $A_Y$ implies thus 
that 
$$A_Y + 2 ( A_Y - g^\star A_Y)$$ is very ample. Therefore we can find a section $\tau \in H^0 (Y, 3 A_Y - 2 g^\star A_Y)$
such that $\tau (y_0) \neq 0$. Then
$\{\tau \otimes g^\star (s_1) (y_0), \cdots , \tau \otimes g^\star (s_r) (y_0)\}$ generates $(3 A_Y \otimes V_n)_{y_0} $
and \eqref{surjeverywhere} is proved.

\bigskip

{\em Step 3: Numerically flatness of $p_\star (L)$}

Let $\mathbb{P} ( p_\star (L))$ (resp. $\mathbb{P} ( V_n ) $) be the projectivization of $p_\star (L)$ (resp. $V_n$). 
We have the commutative diagram
$$
\xymatrix{
\mathbb{P} ( V_n ) \ar[d]_{p_n} \ar[r]^{\pi_n} & \mathbb{P} ( p_\star (L)) \ar[d]^p\\
Y \ar[r]^{\pi_{Y,n}} & Y}
$$
Let $\omega_Y$ be a K\"{a}hler metric in the same class of $A_Y$.
Thanks to \eqref{surjeverywhere}, we can find a smooth metric $h$ on $\mathcal{O}_{\mathbb{P} ( V_n)} (1)$ such that
$$i\Theta_{h} (\mathcal{O}_{\mathbb{P} ( V_n)} (1)) \geq - 3  p_n ^\star \omega_Y =
-\frac{3}{n} (\pi_n \circ p)^\star \omega_Y .$$
Note that $\pi_n ^\star \mathcal{O}_{\mathbb{P} ( p_\star (L))} (1) = \mathcal{O}_{\mathbb{P} ( V_n)} (1)$. Then 
$h$ induces a smooth metric $h_n$ on $\mathcal{O}_{\mathbb{P} ( p_\star (L))} (1)$ by taking the average of the translates of $h$
by the $\pi_{Y,n}$-torsion points. We have
$$i\Theta_{h_n} (\mathcal{O}_{\mathbb{P} ( p_\star (L))} (1)) \geq -\frac{3}{n} p^\star \omega_Y .$$
As this holds for every $n$ sufficiently large and divisible, $\mathcal{O}_{\mathbb{P} ( p_\star (L))} (1)$ is nef by definition. 
Therefore the vector bundle $p_\star (L)$ is nef.
Combining this with \eqref{numtrivial}, $p_\star (L)$ is thus numerically flat.

\bigskip

{\em Step 4: Final conclusion} 

Let $V := p_\star (L)$. As $A$ is very ample, $V$ induces a $p$-relative embedding
$$
\xymatrix{
X \ar[rd]_p \ar@{^{(}->}[rr]^j && \mathbb{P} (V) \ar[ld]^f\\
& Y}
$$
We have $L = j^\star \mathcal{O}_{\mathbb{P} ( V)} (1)$. 
For $m$ large enough, we have thus the exact sequence
\begin{equation}\label{exactseq}
0 \rightarrow f_\star (\mathcal{I}_X \otimes \mathcal{O} _{\mathbb{P} (V)} (m)) \rightarrow  
f_\star ( \mathcal{O} _{\mathbb{P} (V)} (m)) \rightarrow p_\star (m L) \rightarrow 0 . 
\end{equation}
Thanks to Step 4, $f_\star ( \mathcal{O} _{\mathbb{P} (V)} (m)) =\Sym^m V$ is numerically flat. 

\smallskip

{\em Claim :} $p_\star (m L)$ is numerically flat for every $m \geq 1$. 

\smallskip

We will postpone the proof of the claim to Lemma \ref{addedle} 
and first finish the proof of the theorem. As
$p_\star (m L)$ is numerically flat for every $m \geq 1$, by using Proposition \ref{isotri}, 
the theorem is proved.
\end{proof}

To complete the proof of the main theorem, it remains to prove the claim

\begin{lemma}\label{addedle}
The vector bundle $p_\star (m L)$ is numerically flat for every $m \geq 1$.
\end{lemma}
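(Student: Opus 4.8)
The plan is to realise $p_\star(mL)$ as a nef quotient of a numerically flat bundle and then to prove that its determinant is numerically trivial; by Definition \ref{listdef} (3) this gives numerical flatness. First, since $\frac1r\det p_\star(A)$ is a genuine line bundle and $p_\star(mA)$ is locally free, the projection formula gives $p_\star(mL)=p_\star(mA)\otimes\mathcal O_Y(-\tfrac{m}{r}\det p_\star(A))$, so $p_\star(mL)$ is locally free. Twisting the surjection $\Sym^m p_\star(A)\twoheadrightarrow p_\star(mA)$ by $-\frac{m}{r}\det p_\star(A)$ and using $\Sym^m(E\otimes\mathcal L)=\Sym^m E\otimes\mathcal L^{\otimes m}$ produces a surjection $\Sym^m V\twoheadrightarrow p_\star(mL)$, where $V=p_\star(L)$ is numerically flat by Step 3. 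As $\Sym^m V$ is then numerically flat, hence nef, the quotient $p_\star(mL)$ is nef, and it remains only to show $c_1(\det p_\star(mL))=0$.

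Fix an ample class $H$ on $Y$ (with $n=\dim Y$). One inequality is free: $\det p_\star(mL)$ is the determinant of the nef bundle $p_\star(mL)$, hence nef, so $c_1(\det p_\star(mL))\cdot H^{n-1}\ge 0$. Equivalently, writing the kernel sequence $0\to F_m\to\Sym^m V\to p_\star(mL)\to 0$ (in which $F_m$ is a subbundle because the quotient is locally free) and dualising, $F_m^\star$ is a quotient of the numerically flat bundle $(\Sym^m V)^\star$, hence nef; combined with $c_1(\Sym^m V)=0$ this again shows that $c_1(\det p_\star(mL))=-c_1(\det F_m)=c_1(\det F_m^\star)$ is nef.

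The real content is the reverse bound $c_1(\det p_\star(mL))\cdot H^{n-1}\le 0$; together with the previous paragraph and the salience of the pseudo-effective cone of the abelian variety $Y$ it forces $c_1(\det p_\star(mL))=0$. This reverse bound cannot follow from soft considerations alone: a nef quotient of a numerically flat bundle need not be numerically flat (already $\mathcal O_Y^{\,2}\twoheadrightarrow\mathcal L$ with $\deg\mathcal L>0$ on an elliptic curve fails), so the hypothesis that $-K_{X/Y}$ is nef must enter here. The plan is to re-run the positivity--isogeny argument of Steps 1--3 with the pair $(p,mL)$ in place of $(p,L)$: since $mL$ is again pseudo-effective and $p$-ample and $-K_{X/Y}$ is nef, Corollary \ref{usefulcor} and Proposition \ref{psf} apply verbatim to $mL$, yielding the pseudo-effectivity of $r_m\,mL-p^\star\det p_\star(mL)$ on $X$ (with $r_m=\rank p_\star(mL)$) and, after averaging over the $\pi_{Y,n}$-torsion points, the nefness of $p_\star(mL)$ directly. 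The point to be extracted is that the numerical flatness of $V$ --- which becomes a trivial local system after pull-back to the universal cover $\mathbb C^r\to Y$ --- rigidifies the subbundle $F_m\subset\Sym^m V$ enough to force $\deg_H F_m=0$, equivalently $\deg_H\det p_\star(mL)=0$.

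The step I expect to be the main obstacle is exactly this reverse degree bound. The difficulty is structural: the direct-image positivity theorems available to us (Theorem \ref{extension}, Propositions \ref{lowercontr} and \ref{posdir}) all bound curvatures of direct images from below, so they can only ever push $\det p_\star(mL)$ in the pseudo-effective direction and never bound it from above. The upper bound must instead be produced from the rigidity of the flat structure on $V$, transported through $\Sym^m V\twoheadrightarrow p_\star(mL)$ --- concretely, by showing, along the lines of \cite{Cao13} and the proof of Proposition \ref{isotri}, that the inclusion $F_m\hookrightarrow\Sym^m V$ is compatible with the flat connection of the local system $\Sym^m V$ (Theorem \ref{keyflat} (2)), whence $F_m$ is itself numerically flat and $\det F_m$ numerically trivial. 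Once $\det p_\star(mL)$ is numerically trivial, $p_\star(mL)$ is nef with vanishing first Chern class of its determinant, hence numerically flat by Definition \ref{listdef} (3); as this holds for every $m\ge 1$, the lemma follows and Proposition \ref{isotri} completes the proof of the theorem.
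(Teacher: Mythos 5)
Your first half is correct and agrees with the paper: twisting $\Sym^m p_\star (A)\twoheadrightarrow p_\star (mA)$ by $-\frac{m}{r}\det p_\star (A)$ gives the surjection $\Sym^m V\twoheadrightarrow p_\star (mL)$, hence nefness of $p_\star (mL)$, and the whole content is the vanishing $c_1 (p_\star (mL))=0$. But both routes you propose for that vanishing have genuine gaps. The flat-rigidity route is unsound: the compatibility statement you invoke (Theorem \ref{keyflat} (2) via \cite[Lemma 4.3.3]{Cao13}, as used in the proof of Proposition \ref{isotri}) is about morphisms between bundles that are \emph{already known} to be numerically flat; it is not a property of arbitrary subbundles of a numerically flat bundle. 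Your own elliptic-curve example, dualized, refutes the principle you need: the kernel of a surjection $\mathcal{O}_Y^{2}\twoheadrightarrow \mathcal{L}$ with $\deg \mathcal{L}=2$ on an elliptic curve is a subbundle $\mathcal{L}^{-1}\subset \mathcal{O}_Y^{2}$ of a trivial (hence flat) bundle which is not flat. So ``$F_m\hookrightarrow \Sym^m V$ is compatible with the connection, whence $F_m$ is numerically flat'' presupposes exactly what is to be proven. The other route, re-running Steps 1--3 for $(p,mL)$, is also circular: Step 2 of the theorem's proof uses \eqref{numtrivial} in an essential way, to guarantee that the class $c_1 (\Div (s))=c_1(2r A_Y)$ is independent of the isogeny degree $n$ (otherwise $c_1 (\det V_n)=\pi_{Y,n}^\star c_1(\det p_\star (mL))$ grows with $n$ and the argument via \cite[Prop 7.7]{MFK} collapses). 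To run those steps for $mL$ you must first twist by $-\frac{1}{r_m}\det p_\star (mL)$, and what you then obtain is numerical flatness of the \emph{twisted} bundle $p_\star (mL)\otimes (-\frac{1}{r_m}\det p_\star(mL))$, which carries no information about $c_1 (p_\star (mL))$ itself.

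The missing idea --- and where your structural claim that direct-image positivity ``can never bound $\det p_\star (mL)$ from above'' goes wrong --- is to apply the lower-bound-type result to a suitable twist. The paper applies Proposition \ref{psf} to $mL$ (as you do), so that, after an isogeny making the twist a line bundle, $\widetilde{L}:=L-\frac{1}{m\cdot r_m}p^\star c_1 (p_\star (mL))$ is pseudo-effective; it is moreover $p$-ample. Then Proposition \ref{addedlemma} --- which you never invoke, and which is precisely where the nefness of $-K_{X/Y}$ enters --- shows that $\det p_\star (\widetilde{L})$ is pseudo-effective. By the projection formula and \eqref{numtrivial}, $c_1 (\det p_\star (\widetilde{L}))$ is a negative multiple of $c_1 (p_\star (mL))$, so the two pseudo-effective classes $c_1 (p_\star (mL))$ (pseudo-effective by nefness of $p_\star(mL)$) and $c_1 (\det p_\star (\widetilde{L}))$ sum to zero; since the pseudo-effective cone of $Y$ contains no lines, both vanish. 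In other words, the ``upper bound'' on $c_1 (p_\star (mL))$ is exactly the ``lower bound'' (pseudo-effectivity) for the determinant of the direct image of the twisted bundle $\widetilde{L}$.
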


\begin{proof}
As $\Sym^m V$ is numerically flat, the exact sequence \eqref{exactseq} implies that 
$p_\star (m L)$ is nef. It remains to prove that $c_1 (p_\star (m L) ) =0$. 

\medskip

In fact, as $m L$ is $p$-ample and $p_\star (m L)$ is locally free,
Proposition \ref{psf} implies that 
$m L - \frac{1}{r_m} p^\star c_1 (p_\star (m L))$
is $\mathbb{Q}$-pseudo-effective, where $r_m$ is the rank of $p_\star (m L)$.
Then 
\begin{equation}\label{equL}
\widetilde{L} := L- \frac{1}{m \cdot r_m} p^\star c_1 (p_\star (m L))  
\end{equation}
is $\mathbb{Q}$-pseudo-effective.
After passing to some isogeny of $Y$, we can assume that $\frac{1}{m \cdot r_m} c_1 (p_\star (m L))$ is a line bundle. 
Therefore $\widetilde{L}$ is a pseudo-effective line bundle.
By taking the determinant of the direct image of \eqref{equL}, we get 
$$c_1 (\det p_\star  (\widetilde{L})) =   c_1 (\det p_\star (L)) - \frac{1}{m \cdot r_m} c_1 (p_\star (m L)) .$$
Combining this with \eqref{numtrivial}, we have
\begin{equation}\label{finaleq}
\frac{1}{m \cdot r_m} c_1 (p_\star (m L)) + c_1 (\det p_\star  (\widetilde{L})) =0\in H^{1,1} (Y, \mathbb{R}) . 
\end{equation}

To conclude, as $p_\star (m L)$ is proved to be nef, $c_1 (p_\star (m L))$ is pseudo-effective.
By construction, $\widetilde{L}$ is pseudo-effective and $p$-ample. Then Proposition \ref{addedlemma} implies that 
$\det p_\star (\widetilde{L})$ is also pseudo-effective.
Therefore \eqref{finaleq} implies that $c_1 (p_\star (m L))=0$ and the lemma is proved.
\end{proof}

\bigskip

We now discuss the structure of the universal cover of $X$. Let $X$ be a compact K\"ahler manifold with nef anticanonical bundle.
Thanks to \cite{Pau97, Pau12}, we know

\begin{proposition}\cite[Thm 2]{Pau97}\cite{Pau12}\label{isomalb}
Let $X$ be a compact K\"ahler manifold with nef anticanonical bundle. Then after a finite \'{e}tale cover of $X$, the Albanese map
$$p: X\rightarrow Y$$ 
induces an isomorphism of fundamental groups.
\end{proposition}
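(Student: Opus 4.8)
This statement is quoted by the author from \cite{Pau97, Pau12}; the genuinely deep input is the almost-abelianness of $\pi_1(X)$, which I would take as known, and the plan is to deduce the isomorphism from it by passing to a torsion-free abelian cover and then counting ranks.

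First I would invoke the results of \cite{Cam95, Pau97, Pau12}, already recalled in the introduction, that $\pi_1(X)$ is almost abelian: it contains an abelian subgroup of finite index. Since $\pi_1(X)$ is finitely generated, such a subgroup is isomorphic to $\mathbb{Z}^s \times T$ with $T$ finite, and its free part $\mathbb{Z}^s$ is then a torsion-free abelian subgroup of finite index. Let $\pi : \widehat{X} \to X$ be the associated finite \'etale cover, so that $\pi_1(\widehat{X}) \cong \mathbb{Z}^s$; I replace $X$ by $\widehat{X}$ and $Y$ by the Albanese torus $\widehat{Y} := \Alb(\widehat{X})$, noting that the Albanese map of the cover is exactly the map appearing in the desired conclusion.

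Next I would run the universal property of the Albanese. For any compact K\"ahler manifold the Albanese morphism realizes the canonical identification $H_1(\widehat{X}, \mathbb{Z})/\mathrm{tors} \xrightarrow{\ \sim\ } H_1(\widehat{Y}, \mathbb{Z}) = \pi_1(\widehat{Y})$, and the map $\pi_1(\widehat{X}) \to \pi_1(\widehat{Y})$ factors as the abelianization $\pi_1(\widehat{X}) \twoheadrightarrow H_1(\widehat{X}, \mathbb{Z})$ followed by the quotient by torsion. But $\pi_1(\widehat{X}) \cong \mathbb{Z}^s$ is already abelian and torsion-free, so $H_1(\widehat{X}, \mathbb{Z}) = \pi_1(\widehat{X})$ has no torsion; hence both arrows in this factorization are isomorphisms and $\pi_1(\widehat{X}) \to \pi_1(\widehat{Y})$ is an isomorphism. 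Equivalently, $s = b_1(\widehat{X}) = 2 \dim \widehat{Y} = \rank \pi_1(\widehat{Y})$, so the two groups are free abelian of the same rank and the surjection between them is automatically an isomorphism, $\mathbb{Z}^s$ being Hopfian.

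The one real obstacle is precisely the ingredient quoted at the outset: the almost-abelianness of $\pi_1(X)$ when $-K_X$ is nef. This is a deep differential-geometric fact, resting on the approximation of $-K_X$ by metrics of almost-nonnegative Ricci curvature together with the structure theory of such manifolds, and I would not attempt to reprove it. Once it is granted, the reduction to a torsion-free abelian cover and the ensuing rank count are entirely formal.
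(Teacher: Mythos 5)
Your reduction is formally correct as far as it goes: if $\pi_1(X)$ contains a torsion-free abelian subgroup $\mathbb{Z}^s$ of finite index, then on the corresponding finite \'etale cover $\widehat{X}$ the Albanese map induces on fundamental groups exactly the abelianization modulo torsion, $\pi_1(\widehat{X}) \to H_1(\widehat{X},\mathbb{Z})/\mathrm{tors} = \pi_1(\Alb(\widehat{X}))$, which is an isomorphism when $\pi_1(\widehat{X})$ is already abelian and torsion-free (and for this step you do not even need surjectivity of the Albanese map). The problem is your starting point. The almost-abelianness of $\pi_1(X)$ is \emph{not} an input available independently of the proposition: for a compact K\"ahler manifold the two statements are equivalent (your argument is one direction; the converse is immediate, since the proposition exhibits a finite-index subgroup of $\pi_1(X)$ isomorphic to $\pi_1(Y)\cong \mathbb{Z}^{2\dim Y}$), and in the literature you cite --- Theorem 2 of \cite{Pau97}, which is precisely what this proposition quotes --- almost-abelianness is the \emph{conclusion}, obtained by the very argument you are bypassing. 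So the proposal is circular.

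The mislocation is precise: the deep differential-geometric input (approximation of $-K_X$ by metrics of almost nonnegative Ricci curvature, Bishop--Gromov, and Gromov's polynomial-growth theorem, i.e.\ Theorem 1 of \cite{Pau97}) yields only that $\pi_1(X)$ is \emph{virtually nilpotent}, not virtually abelian. The upgrade from nilpotent to abelian is a K\"ahler-theoretic step, and it is exactly the content of the paper's proof: the Albanese map is a fibration by \cite{Zha96, Pau12}; Campana's Theorem 2.2 in \cite{Cam95}, applied to this fibration with $\pi_1(Y)$ abelian, gives that $p_\star : \pi_1(X)/G'_n \to \pi_1(Y)$ is an isomorphism for all $n$, where $G'_n = \sqrt{G_n}$ is built from the descending central series; after a finite \'etale cover one may assume $\pi_1(X)$ nilpotent and torsion-free, so that $G'_n$ is trivial for $n$ large, and the isomorphism $\pi_1(X)\cong\pi_1(Y)$ follows. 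A non-circular proof must reproduce (or cite) this chain --- virtual nilpotency plus Campana's theorem on nilpotent quotients of K\"ahler groups --- rather than the almost-abelianness that is equivalent to the statement being proved.
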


\begin{proof}
For readers' convenience, we recall briefly the main steps of the proof of \cite[Thm 2]{Pau97}. 
First of all, thanks to \cite{Zha96, Pau12}, the albanese map is surjective with connected fibers.
Let $(G_n)$ be the descending central series of $\pi_1 (X)$, i.e., $G_1 = \pi_1 (X)$, $G_{n+1} = [G, G_n]$.
Set $G'_n =\sqrt{G_n}$. 
By applying \cite[Thm 2.2]{Cam95} to $p$, as $p$ is a fibration and $\pi_1 (Y)$ is abelian,  we know that
$$p_\star : \pi_1 (X) / G'_n  \rightarrow \pi_1 (Y)$$
is an isomorphism for all $n$. 

\cite[Thm 1]{Pau97} shows that $\pi_1 (X)$ 
is virtually nilpotent. Therefore, up to a finite \'{e}tale cover of $X$, we can assume that $\pi_1 (X)$ is nilpotent and torsion free.
Then $\pi_1 (X) / G'_n = \pi_1 (X)$ for some $n \in \mathbb{N}$. As a consequence, 
$$p_\star : \pi_1 (X) \rightarrow \pi_1 (Y)$$
is an isomorphism.
\end{proof}

As an application, we have the following result.

\begin{corollary}
Let $X$ be a projective manifold with nef anticanonical bundle.
Then the universal cover $\widetilde{X}$ of $X$ admits the following splitting 
$$\widetilde{X} \simeq \mathbb{C}^r \times F .$$
Here $F$ is a simply connected projective manifold with nef anticanonical bundle and $r = \sup h^{1,0} (\widehat{X})$ where the supremum is taken over
all finite \'{e}tale covers $\widehat{X} \rightarrow X$.
\end{corollary}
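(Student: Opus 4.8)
The plan is to deduce the statement from Theorem \ref{mainproof} together with Proposition \ref{isomalb}, the real content being the identification of the integer $r$ with $\sup h^{1,0}(\widehat X)$.

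First I would pass to a convenient finite \'etale cover. By Proposition \ref{isomalb} there is a finite \'etale cover $\pi : X' \to X$ for which the Albanese map $p' : X' \to Y'$ induces an isomorphism $p'_\star : \pi_1(X') \xrightarrow{\ \sim\ } \pi_1(Y')$. Since $\pi$ is \'etale we have $K_{X'} = \pi^\star K_X$, so $-K_{X'}$ is again nef and $X'$ is projective; moreover $X$ and $X'$ share the same universal cover $\widetilde X$. Applying Theorem \ref{mainproof} to $X'$ gives that $p'$ is locally trivial and that $X' \times_{Y'} \mathbb{C}^{r'} \simeq \mathbb{C}^{r'} \times F$, where $r' = \dim Y'$ and $F$ is the generic fibre of $p'$.

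Next I would check that this product is the universal cover. As $p'$ is a locally trivial fibration and $\pi_2(Y') = 0$ (because $Y'$ is a torus), the homotopy exact sequence $\pi_2(Y') \to \pi_1(F) \to \pi_1(X') \to \pi_1(Y')$ shows that $\pi_1(F) \to \pi_1(X')$ is injective with image contained in $\ker(p'_\star) = 0$; hence $F$ is simply connected. Therefore $\mathbb{C}^{r'} \times F$ is simply connected and, being a covering of $X'$, it is the universal cover, so $\widetilde X \simeq \mathbb{C}^{r'} \times F$. Being a fibre of $p'$, $F$ is a closed submanifold of the projective manifold $X'$ and hence projective; and since $p'$ is a submersion with $K_{Y'} = \mathcal{O}_{Y'}$ we get $-K_F = (-K_{X'})|_F$, which is nef. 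This already produces the desired splitting with $F$ a simply connected projective manifold with nef anticanonical bundle.

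Finally I would identify $r'$ with $\sup h^{1,0}(\widehat X)$, and I expect this to be the main obstacle. The crucial point is to pin down the deck action on the product: by the construction of the splitting in Proposition \ref{isotri} the relative embedding of $\widetilde X$ over $\mathbb{C}^{r'}$ is given by polynomials whose coefficients are independent of the base, i.e. the family over $\mathbb{C}^{r'}$ has trivial monodromy, so the deck group $\pi_1(X') \cong \pi_1(Y')$ acts on $\mathbb{C}^{r'} \times F$ by $(z,w) \mapsto (z + \lambda(g), w)$, translating the first factor and fixing $F$. Granting this, every finite \'etale cover of $X'$ corresponds to a finite-index subgroup acting again by such translations, hence splits as $\widehat Y \times F$ with $\widehat Y$ a torus of dimension $r'$; thus $h^{1,0} = h^{1,0}(\widehat Y) + h^{1,0}(F) = r'$ for all of them. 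For an arbitrary finite \'etale cover $\widehat X \to X$, the base change $\widehat X \times_X X'$ is a finite \'etale cover of both $\widehat X$ and $X'$, so $h^{1,0}(\widehat X) \le h^{1,0}(\widehat X \times_X X') = r'$, while $h^{1,0}(X') = r'$ is attained. Hence $r = \sup h^{1,0}(\widehat X) = r'$, which finishes the proof.
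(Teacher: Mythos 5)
Your overall architecture --- pass to the cover given by Proposition \ref{isomalb}, apply Theorem \ref{mainproof}, and use the homotopy exact sequence with $\pi_2(Y')=0$ to get simple connectedness of $F$ --- is exactly the paper's proof, and your first two paragraphs are correct. The gap is in the last step, where you identify $r'$ with $\sup h^{1,0}(\widehat X)$. Your claim that the splitting of Proposition \ref{isotri} forces the deck group to act by $(z,w)\mapsto (z+\lambda(g),w)$, fixing the $F$-factor, does not follow and is false in general. What the constant-coefficient polynomials give is that the total space $\widetilde X\subset \mathbb{C}^{r'}\times\mathbb{P}^{n-1}$ is the product $\mathbb{C}^{r'}\times F$; but the deck group acts on $\mathbb{C}^{r'}\times\mathbb{P}^{n-1}$ by $(z,[w])\mapsto (z+\lambda,[\rho(\lambda)w])$, where $\rho$ is the monodromy of the numerically flat local system $p_\star(L)$. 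This action preserves the product but moves points of $F$ through the projectivized monodromy, which need not be trivial. Concretely, take $X=\mathbb{P}(\mathcal{O}_E\oplus\eta)$ over an elliptic curve $E$, with $\eta$ a nontrivial $2$-torsion line bundle given by an order-two character $\chi$ of $\pi_1(E)$: then $-K_X$ is nef, the Albanese map $p:X\to E$ already induces an isomorphism on $\pi_1$ (so $X'=X$), and the universal cover is $\mathbb{C}\times\mathbb{P}^1$ with deck action $(z,[w_0:w_1])\mapsto (z+\lambda,[w_0:\chi(\lambda)w_1])$, which is nontrivial on the $\mathbb{P}^1$-factor. Your next assertion then also fails: the finite \'etale covers of $X'$ corresponding to subgroups not contained in $\ker\chi$ are again nontrivial $\mathbb{P}^1$-bundles $\mathbb{P}(\mathcal{O}\oplus\eta')$, not products $\widehat Y\times F$.

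Fortunately, the numerical conclusion you need survives and has a one-line proof that bypasses the deck action entirely: for any finite \'etale cover $\widehat X\to X'$, the group $\pi_1(\widehat X)$ is a finite-index subgroup of $\pi_1(X')\cong\pi_1(Y')\cong\mathbb{Z}^{2r'}$, hence is free abelian of rank $2r'$, so $b_1(\widehat X)=2r'$ and $h^{1,0}(\widehat X)=\tfrac12 b_1(\widehat X)=r'$ by Hodge theory; combined with your fibre-product monotonicity argument (where you should pass to a connected component of $\widehat X\times_X X'$, which need not be connected) this gives $r=r'$. The paper sidesteps the issue in yet another way: it first passes to a cover realizing the supremum --- which is attained, being bounded by $\dim X$ --- so that $h^{1,0}(X)=r$ before applying Proposition \ref{isomalb}; the dimension of the Albanese torus of that cover is then $r$ by definition, and no statement about arbitrary covers or about the deck action is needed.
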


\begin{proof}
After some finite \'{e}tale cover of $X$, we can assume that $h^{1,0} (X) =r$. Thanks to Proposition \ref{isomalb},
we can assume moreover that the Albanese map $p: X \rightarrow Y$ induces an isomorphism of fundamental groups :
\begin{equation}\label{isofund}
p_\star : \text{ } \pi_1 (X) \rightarrow \pi_1 (Y) . 
\end{equation}
Let $F$ be the generic fibre of $p$. Then $\dim Y =r$. 
Let $\pi: \mathbb{C}^r \rightarrow Y$ be the universal cover and set $X_1 := X\times_Y  \mathbb{C}^r$.
Theorem \ref{mainproof} implies that $p$ is locally trivial and we have the splitting 
\begin{equation}\label{coverX}
X_1 \simeq \mathbb{C}^r \times F .
\end{equation} 

It remains to prove that $F$ is simply connected. As $p$ is a submersion, we have the exact sequence
$$\pi_2 (Y) \rightarrow \pi_1 (F) \rightarrow \pi_1 (X) \rightarrow \pi_1 (Y) \rightarrow 1 .$$
Since $Y$ is a torus, we know that $\pi_2 (Y) =1$. Then the isomorphism \eqref{isofund} and the above exact sequence imply that
$\pi_1 (F)$ is trivial. The corollary is proved.
\end{proof}

\end{document}